\documentclass[11pt]{amsart}
\usepackage{amssymb,latexsym,amsmath}
\usepackage[mathscr]{eucal}
\usepackage{tikz-cd}
\usepackage{graphicx}

\linespread{1.5} 

 \usepackage{epstopdf}
  \usepackage{color} 
 \usepackage{amsthm,graphpap}

 \newcommand\s{{\mathrm{s}}}

\long\def\symbolfootnote[#1]#2{\begingroup%
\def\thefootnote{\fnsymbol{footnote}}\footnote[#1]{#2}\endgroup}

\newcommand\bsm{ \begin{smallmatrix}}
\newcommand\esm{\end{smallmatrix} }
\newcommand\bbm{\left[\begin{smallmatrix}}
\newcommand\ebm{\end{smallmatrix}\right]}
\newcommand\bcs{\begin{cases}}
\newcommand\ecs{\end{cases}}

  \renewcommand\arraystretch{1.25}

 \newcommand\fg{\mathfrak{g}}

\newcommand{\R}{\mathbb{R}}

\newcommand{\cA}{\mathscr{A}}

\newcommand{\cF}{{\mathscr{F}}}

\newcommand{\cH}{{\mathscr{H}}}

\newcommand{\cM}{{\mathscr{M}}}

\newcommand{\cO}{{\mathscr{O}}}

\newcommand{\cZ}{{\mathscr{Z}}}

\newcommand\bpm{\begin{pmatrix}}
\newcommand\epm{\end{pmatrix}}

 \renewcommand{\part}{\partial}

\newcommand\Ga{{\Gamma}}

\newcommand\tPhi{{\widetilde{\Phi}}}

\newcommand\bsl{\backslash}

\newcounter{demo}[equation]

 \newtheoremstyle{mytheo}
 {3pt}
  {3pt}
  {\itshape}
  {}
  {\scshape}
  {:}
  {.5em}
  {}

\theoremstyle{mytheo}

\newtheoremstyle{note}
  {3pt}
  {3pt}
  {}
  {}
  {\bfseries}
  {:}
  {.5em}
  {}
\theoremstyle{note}

\theoremstyle{remark}

\newcommand\bC{\mathbf{C}}
\newcommand\bP{\mathbf{P}}

\newcommand\ol[1]{\overline{#1}}

\input{xy}
\xyoption{all}


\setlength{\evensidemargin}{0.25in}
\setlength{\oddsidemargin}{0.25in}
\setlength{\textwidth}{6in}
\setlength{\parindent}{20pt}
\setlength{\footskip}{20pt}
\renewcommand{\thefootnote}{\arabic{footnote}}
\numberwithin{equation}{subsection}

\def\Schmid{Sc}

\theoremstyle{plain}
\newtheorem{theorem}[equation]{Theorem}
\newtheorem{proposition}[equation]{Proposition}
\newtheorem{corollary}[equation]{Corollary}
\newtheorem{conjecture}[equation]{Conjecture}
\newtheorem{lemma}[equation]{Lemma}

\newtheorem*{claim*}{Claim}

\theoremstyle{definition}
\newtheorem{definition}[equation]{Definition}

\newtheorem*{example*}{Example}
\theoremstyle{remark}
\newtheorem{remark}[equation]{Remark}

\def\half{\tfrac{1}{2}}
\def\sur{\twoheadrightarrow}
\def\inj{\hookrightarrow}

\newcommand{\mystack}[2]{\ensuremath{ \substack{ \hbox{\tiny{${#1}$}} \\ \hbox{\tiny{${#2}$}} }} }
\def\tinyb{{\hbox{\tiny{$\bullet$}}}}

\def\cksto{\hat{\longrightarrow}}

\def\tand{\quad\hbox{and}\quad}
\def\sA{\mathscr{A}}
\def\tAd{\mathrm{Ad}}
\def\tad{\mathrm{ad}}
\def\tAut{\mathrm{Aut}}

\def\a{\alpha}
\def\olB{\overline{B}}

\def\b{\beta}
\def\bC{\mathbb{C}}

\def\d{\delta}

\def\td{\mathrm{d}}
\def\tdet{\mathrm{det}}

\def\tdim{\mathrm{dim}}

\def\tEnd{\mathrm{End}}

\def\e{\varepsilon}

\def\cF{\mathcal{F}}
\def\cFe{\mathcal{F}_\mathrm{e}}
\def\sF{\mathscr{F}}

\def\GsD{\Gamma\backslash D}

\def\tGL{\mathrm{GL}}
\def\fg{\mathfrak{g}}
\def\tGr{\mathrm{Gr}}

\def\sH{\mathscr{H}}

\def\cH{\mathcal{H}}

\def\fh{\mathfrak{h}}
\def\bi{\hbox{\footnotesize$\sqrt{-1}$}}

\def\tker{\mathrm{ker}}

\def\sL{\mathscr{L}}

\def\Le{\Lambda_\mathrm{e}}

\def\olS{\hat\wp{}^0}
\def\olP{\overline{\wp}{}^0}

\def\bN{\mathbf{N}}
\def\sN{\mathscr{N}}
\def\fn{\mathfrak{n}}

\def\sO{\mathscr{O}}
\def\cO{\mathcal{O}}
\def\op{\oplus}
\def\ot{\otimes}

\def\fP{\mathfrak{P}}
\def\fp{\mathfrak{p}}
\def\tPhi{\widetilde\Phi}
\def\Phie{\Phi^0}
\def\hPhie{\hat\Phi{}^0}
\def\bR{\mathbb{R}}

\def\fsl{\mathfrak{sl}}
\def\tSL{\mathrm{SL}}

\def\s{\sigma}
\def\tStab{\mathrm{Stab}}
\def\tspan{\mathrm{span}}

\def\sU{\mathscr{U}}
\def\olU{\overline{\sU}}

\def\sV{\mathscr{V}}
\def\w{\omega}

\def\sfv{\mathsf{v}}

\def\bY{\mathbf{Y}}
\def\sY{\mathscr{Y}}
\def\z{\zeta}
\def\cZ{\mathcal{Z}}
\def\fz{\mathfrak{z}}
\def\bZ{\mathbb{Z}}

\def\sb{{\hbox{\tiny{$\bullet$}}}}

\newenvironment{sblist}{ 
  \begin{list}{$\sb$}
   {\usecounter{cnt} \setlength{\itemsep}{2pt}
    \setlength{\leftmargin}{20pt} \setlength{\labelwidth}{20pt}
    \setlength{\listparindent}{20pt} }
   }
   {\end{list}}

\usepackage{enumitem}
\newenvironment{i_list}
  {\begin{enumerate}[label=(\roman*),itemsep=3pt,leftmargin=25pt,listparindent=20pt]}
  {\end{enumerate}}
\newenvironment{i_list_emph}
  {\begin{enumerate}[label=\emph{(\roman*)},itemsep=3pt,leftmargin=25pt,listparindent=20pt]}
  {\end{enumerate}}

\newcommand{\hsp}[1]{{\hbox{\hspace{#1}}}}

\begin{document}

\title[Period mappings and the augmented Hodge line bundle]
  {Period mappings and properties of the augmented Hodge line bundle} 

\author[Green]{Mark Green}
\email{mlg@math.ucla.edu}
\address{UCLA Mathematics Department, Box 951555, Los Angeles, CA 90095-1555}

\author[Griffiths]{Phillip Griffiths}
\email{pg@math.ias.edu}
\address{Institute for Advanced Study, 1 Einstein Drive, Princeton, NJ 08540}
\address{University of Miami, Department of Mathematics, 1365 Memorial Drive, Ungar 515, Coral Gables, FL  33146}

\author[Laza]{Radu Laza}
\email{radu.laza@stonybrook.edu}
\address{Mathematics Department, Stony Brook University, Stony Brook, NY 11794-3651}
\thanks{Laza was partially supported by NSF grants DMS 1802128 and 1361143.}

\author[Robles]{Colleen Robles}
\email{robles@math.duke.edu}
\address{Mathematics Department, Duke University, Box 90320, Durham, NC  27708-0320}
\thanks{Robles was partially supported from the NSF via grants DMS 1611939 and 1361120.}

\begin{abstract}
Let $\wp\subseteq \Ga\bsl D$ be the image of a period map.  We discuss progress towards a conjectural Hodge theoretic completion $\olP$, an analogue of the Satake-Baily-Borel compactification in the classical case.  The set $\olP$ is defined and given the structure of a compact Hausdorff topological space.  We conjecture that it admits the structure of a compact complex analytic variety.  We verify this conjecture when $\tdim\,\wp \le 2$.  In general, $\olP$ admits a finite cover $\olS$ (also a compact Hausdorff space, and constructed from Stein factorizations of period maps).  Assuming that $\olS$ is a compact complex analytic variety, we show that a lift of the augmented Hodge line bundle $\Lambda$ extends to an ample line bundle, giving  $\olS$ the structure of a projective normal variety. Our arguments rely on refined positivity properties of Chern forms associated to various Hodge bundles; properties that might be of independent interest.
\end{abstract}
\date{July 2021}
\maketitle
\setcounter{tocdepth}{1}
\tableofcontents


\section{Introduction}  \label{S:intro}


Let $\cM$ be the moduli space for smooth varieties $X$ of general type and with given numerical characters.  In a sweeping generalization of the Deligne--Mumford compactification $\overline{\cM}_g$ of the moduli space of curves, Koll\'ar, Shepherd-Barron and Alexeev (KSBA), with contributions of many others, have constructed a canonical projective completion $\ol\cM$ with geometric meaning (see \cite{KolSur} and the references therein).  
 However, even in the case of surfaces of general type with small invariants, little is known  towards a classification of the boundary varieties, and about the global structure of the moduli space and its boundary $\part\cM$.  
A basic idea is to study $\cM$ and its compactifications by using a natural invariant, namely a period mapping $\Phi:\cM\to\Ga\bsl D$, which associates to a smooth variety its relevant cohomology group endowed with a Hodge structure.  
Since $\Ga\bsl D$ has a rich structure,  the period map should be a powerful tool for understanding the structure of $\cM$ and $\ol\cM$. 

This is indeed the case for  the study of moduli spaces of curves, abelian varieties, $K3$ surfaces, and a few other related cases. Beyond these classical cases,  to our knowledge, little is known in terms of the behavior of period maps for compactified moduli spaces. Among the simplest non-classical cases are the surfaces of general type with small invariants, for instance the case of H-surfaces and I-surfaces (surfaces of general type with $p_g=2$, $q=0$, $K^2=2$ or $1$ respectively). For such surfaces, one has a reasonable hold on the geometry of the KSBA degenerations (e.g. see  \cite{FPR17}). In current work in progress, we have obtained a number of partial results about an extended period map for H and I-surfaces. The emerging picture (e.g. see \cite{PGgaf3}) from this investigation is that this extended period map can be a very effective way to understand and give structure to the compactified moduli space $\ol\cM$.

In this paper, we focus on a piece of  this program. Namely, given the image of a period map $\wp :=  \Phi(\cM)\subset \GsD$, we are interested in a Hodge theoretic completion $\olP$ of it. Before going into details, we recall that  the subject naturally splits into a \emph{classical case}, when $D$ is a Hermitian symmetric domain, and a complementary \emph{non-classical case}. The classical case is well understood: there is a canonical, but quite singular, Satake-Baily-Borel projective compactification of $\GsD$ (\cite{Sa}, \cite{BB}), which admits various (partial) toroidal resolutions (\cite{AMRT}). 
In contrast, much less is known in the non-classical case (see however \cite{KU,KNU}, \cite{BBT18}, and \cite{GGRinfty} for some related work). 


\subsection{A general Baily-Borel type compactification} 
Here, we consider a period map
\begin{equation}
\label{1.1}
\Phi:B\to\Ga\bsl D\end{equation}  with $B$ a smooth, quasi-projective variety, and $D$ a \emph{Mumford--Tate domain}\footnote{Period domains are Mumford--Tate domains, but the later are more general.  It is necessary that we work in this greater generality; even if we are interested in period maps with target $D$ a period domain. Some of our arguments are inductive, and need the extra flexibility given by $D$ being a Mumford-Tate domain. Many of the results in the literature that are stated for period domains, are in fact true also for Mumford-Tate domains (see esp. \cite{KP16,KPR}).}  (e.g. see \cite{GGK}).   We  fix a smooth projective compactification $\ol B$ such that $Z=\ol B\bsl B$ is a reduced simple normal crossing divisor.  We further assume that the local monodromies $T_i = \exp(N_i)$ around the irreducible branches $Z_i$ of $Z$ are unipotent.    We denote by 
\begin{equation}\label{1.2}
  \wp \ := \ \Phi(B) \ \subset \ \GsD \,
\end{equation}
the image of such a period map. 

Our goal is to construct a completion $\olP$ of the image of the period map $\wp$, and an extension 
\begin{equation}\label{E:Phie}
  \Phi^0 : \olB \ \to \ \olP
\end{equation}
of the period map \eqref{1.1}.  In the classical case, the existence of $\olP$ is a consequence of the Baily-Borel theory \cite{BB}, while the extended period map \eqref{E:Phie} is due to Borel \cite{Bextn}. Furthermore, still in the classical case, both $\olP$ and  $\Phi^0$ are algebraic. In particular, $\wp$ is a quasi-projective variety, and frequently $\wp= \GsD$. In contrast, in the general non-classical case,   $\GsD$ does not carry, even abstractly, an algebraic structure (cf. \cite{GRT}). Nonetheless, when restricting to images of period maps, various algebraicity  properties are known or expected (see esp. \cite{G2}, \cite{Som}, \cite{BBT18}).  Most relevant here are the recent results of Bakker--Brunebarbe--Tsimerman \cite{BBT18} who proved  that
the image  $\wp$ of a period map is a quasi-projective variety, and that the augmented Hodge line bundle $\Lambda$ (see Definition \ref{dfn:ahlb}) is ample on $\wp$. In this context, we report here on steps towards the construction of a compactified version \eqref{E:Phie} for the period map, which furthermore we show to be (conditionally) algebraic.


Inspired by the situation in the classical case, and the analysis of degenerations of Hodge structure for surfaces of general type (e.g. Remark \ref{rem:ksba} below), it is clear that a compactification 
\begin{equation}\label{defP0}
  \olP \ := \ \bigcup_W \wp^0_W
\end{equation}
of the image $\wp$ of the period map should be obtained by gluing on pieces $\wp^0_W$ (see \eqref{def-stratapw}) parameterizing the possible graded quotients of the limit mixed Hodge structure (LMHS) arising from the period map $\Phi$ and the normal crossing compactification $B\subset \olB$ of the base (N.B. the index $W$ corresponds to a natural stratification $\{Z_W\}_W$ of $\olB$, with strata admitting proper period maps $\Phi^0_W : Z_W \to \wp^0_W$;\footnote{While we defer the details to the discussion in Section \ref{S:basics}, we point out that already here, one needs to work in the more general set-up of Mumford-Tate domains.} the unique open stratum accounts for $\Phi : B \to \wp$). For instance, in the classical case of abelian varieties $\olP=\mathfrak A_g^*=\mathfrak A_g\sqcup \mathfrak A_{g-1}\sqcup\dots\sqcup\mathfrak A_{0}$. The general case  is discussed in Section \ref{S:basics} below. Once this is done, one obtains a set-theoretic extension 
\begin{equation}\label{defPext}
  \Phie : \olB \ \to \ \olP \,,
\end{equation}
of $\Phi : B \to \wp$, which is naturally stratified by analytic pieces $\left.\Phie\right|_{Z_W} := \Phi^0_W$.  It is a consequence of the several-variable $\tSL(2)$--orbit theorem  \cite{CKS1} that  $\olP$ can be given the structure of a Hausdorff topological space (compatible with the existing topology on $\wp^0_W$ as an analytic variety). In fact, $\olP$ is naturally a stratified topological spaces and the structure of how the strata fit together depends on deep properties of LMHS's, including the weight filtration property. Additionally, the strata $\wp^0_W$ are quasi-projective varieties (cf. \cite{BBT18}). Thus, it is natural to expect the following:
\begin{conjecture}\label{conj:a}
The image $\olP = \Phie(\olB)$ is a projective completion of the image of the period map $\wp$. Furthermore, the extended period map $\Phie: \olB \to \olP$ is an algebraic morphism.
\end{conjecture}

\begin{remark}[\emph{The conjecture holds when $D$ is Hermitian}]\label{R:herm}
In the classical case (when $D$ is Hermitian symmetric and $\Gamma$ is arithmetic), $\Ga\bsl D$ is a quasi-projective variety, with a projective compactification $(\Ga\bsl D)^*$ (cf. \cite{Sa, BB}). Furthermore, the Borel Extension Theorem \cite{Bextn} yields an extension $\Phie:\ol B\to (\Ga\bsl D)^*$  of the period map \eqref{1.1} to an algebraic map. In this situation, we can take $\olP$ to be the closure of $\wp$ in $(\Ga\bsl D)^*$. In general, such an argument would not work: the quotient $\Ga\bsl D$ has no algebraic structure (\cite{GRT}), and meaningful compactifications are expected only in the horizontal directions.  
\end{remark}

\begin{remark}[\emph{KSBA vs. Baily-Borel compactifications for surfaces}]\label{rem:ksba}
Since the singularities of limit objects occurring in KSBA compactifications are du Bois (cf. \cite{KK}), standard arguments (e.g. see \cite{KL1,KLS}) show that for surfaces the mixed Hodge structure of a KSBA stable surface $S_0$ determines the graded pieces of any KSBA smoothing of $S_0$. Thus, at least set-theoretically for surfaces of general type there is a map $\overline{\cM}\to  \olP$ between the KSBA and Baily-Borel compactifications, analogous to the Torelli map $\overline{\cM}_g \to \mathfrak A_g^*$ for curves. It is of course, natural to speculate that (at least after passing to normalizations) $\overline{\cM}\to  \olP$ is in fact an algebraic morphism between projective varieties. This will be discussed elsewhere. On the other hand, starting with dimension $3$, the map $\overline{\cM}\to  \olP$ does not exist even set-theoretically. This is explained by the difference between the standard Hodge bundle (relevant to KSBA compactifications) and the augmented Hodge bundle (relevant to Baily-Borel type compactifications), which is visible only starting with dimension $3$. 
\end{remark}

For various technical reasons, it is in fact preferable not to work with $\Phi^0 : \olB \to \olP$ but rather a finite cover 
\[
\begin{tikzcd}
  \olB \arrow[r,"\hPhie"'] \arrow[rr,bend left=20,"\Phie"] 
  & \olS \arrow[r] & \olP\,,
\end{tikzcd}
\]
of it which arises from the Stein factorizations of the period maps $\Phi^0_W$ on the strata (N.B. by construction, $\Phi^0_W$ are all arranged to be proper; see \S\ref{S:stein}). The set $\hat\wp{}^0$ is naturally a Hausdorff topological space covered  by a stratification with normal, quasi-projective strata and, importantly, the fibres of $\hPhie$ are \emph{connected, compact} algebraic subvarieties of $\olB$. In the context of Conjecture \ref{conj:a}, it is natural to expect that $\hat\wp{}^0$ is a normal projective variety.\footnote{One of the advantages of passing to $\hat\wp{}^0$ is to gain normality.} We are not able to prove this at this time. We are making instead a weaker conjecture, namely $\hat\wp{}^0$ can be upgraded to a complex analytic variety. If this indeed the case, we are able to show that  $\hat\wp{}^0$ is algebraic. Specifically, our main conjecture in this paper is the following:

\begin{conjecture}\label{th:a0}	
The compact Hausdorff space $\olS$ admits the structure of a normal complex analytic variety with the properties that:
\begin{i_list_emph}
\item 
The extension $\hPhie : \olB \to \olS$ is an analytic map.
\item
The restriction of the analytic structure on $\olS$ to the strata $\hat\wp{}^0_W$ coincides with the natural analytic structure on $\hat\wp{}^0_W$.
\end{i_list_emph}
\end{conjecture}

\begin{remark}
The conjecture holds when $D$ is Hermitian (see Remark \ref{R:herm}).
\end{remark}

\begin{remark}[\emph{The conjecture holds when $\tdim \wp = 1$}]\label{R:dim=1}
In the case that $\wp = \Phi(B)$ is one--dimensional, the conjecture is a consequence of non-trivial results of Sommese \cite{Som73} and Cattani-Deligne-Kaplan \cite{CDK}. We note that a key ingredient in Sommese's argument, the {\it Siegel property}, fails\footnote{An explicit example of this was given in an earlier version of our paper, which is available on arXiv.} in general for higher dimensional bases. This illustrates one of the challenges of passing from the one-variable case to multiple variables.
\end{remark}

Our first result (see \S\ref{S:prfdimB=2}) is the two dimensional version of the above conjecture. 

\begin{theorem}\label{T:dimB=2}
If $\tdim\,B = 2$ and $\Phi : B \to \Gamma \backslash D$ satisfies local Torelli, Conjecture \ref{th:a0} holds.
\end{theorem}

Moving on to the general case, we introduce in \S\ref{S:eaHb} the {\it augmented Hodge line bundle} $\Lambda$ living on $B$ (aka {\it Griffiths' bundle}) and the extended augmented Hodge bundle $\Le$ living on $\overline B$. The extended Hodge bundle descends to $\hat\wp^0$.

\begin{theorem}[{\cite{GGRinfty}}] \label{T:descends}
For some $m\ge 1$, the line bundle $\Le^m$ on $\overline B$ descends to a line bundle on $\hat\wp^0$.
\end{theorem}

\begin{remark}
In the case that $\Gamma$ is neat, we may take $m=1$ in the theorem, cf.\cite{GGRinfty}.
\end{remark}

For notational simplicity, we will drop the index $m$ -- that is, we assume $m=1$ -- and work with $\Le$.  We note, however, that the arguments that follow all apply to the general case, with $\Le^{\ot m}$ in place of $\Le$.

A priori, $\Le \to \olS$ is a topological line bundle with the property that its restrictions to the analytic strata $\wp^0_W \subset \olS$ are holomorphic.  However, if Conjecture \ref{th:a0} holds, then  $\Le \to \olS$ naturally admits the structure of a holomorphic line bundle that is compatible with the existing holomorphic structure of $\left.\Le\right|_{\wp^0_W}$.
With these preliminaries, we can state our main result, the conditional (on Conjecture \ref{th:a0}) algebraicity of the compactified $\olS$.

\begin{theorem}\label{th:b}
Assume that the differential of $\Phi: B \to \Gamma \backslash D$ is generically injective, and that Conjecture \ref{th:a0} holds.  Then $\Le\to\olS$ is ample.
\end{theorem}

\begin{remark} 
The assumption that the differential of $\Phi$ is generically injective is natural, and for us the fundamental case. Namely,  we are interested in completing $\wp = \Phi(B)$.  Thus, if needed, we could replace $B$ and $\olB$ by a suitable log resolution of a compactification of $\wp$.   
\end{remark}

\begin{remark}
The algebraic structure given by Theorem \ref{th:b} is compatible with the algebraic structure (on strata) coming from \cite{BBT18}. Specifically, if one can show that $\Le$ is semi-ample on $\overline B$, then  $\overline{\wp} = \mathrm{Proj}R(\olB,\Le)$ is a projective compactification of $\wp$, and we have a commutative diagram 
\begin{equation}\label{diag-BBT}
  \begin{tikzcd}
  \olB \arrow[r,"\hPhie"'] \arrow[rrr,bend left=25,"\Phie"] 
  & \olS \arrow[r]
  & \mathrm{Proj}\,R(\olB,\Le) \arrow[r] 
  & \olP
  \end{tikzcd}
\end{equation}
relating the various spaces arising in our paper. 
\end{remark}

The proof of Theorem \ref{th:b} is in spirit analogous to the one used by Kodaira to show that over a compact, complex manifold a line bundle with positive Chern class in the differential-geometric sense is ample.  The argument depends on some rather subtle properties of the Chern form $c_1(\Lambda)$ that are discussed in \S\ref{S:chern} below. We note that the discussion of Chern forms is perhaps more general than strictly needed here. However, we choose to include it as it might be of independent interest, and in fact it was already used in some other work (e.g. \cite{phvb}). 

\subsection{Properties of the Chern forms} \label{S:chern}
Let $
  \tGr^p\cF = \cF^p/\cF^{p+1} \ \to \ B$
denote the quotient Hodge bundles.  The polarization induces Hermitian forms on these bundles.  Let $c_r(\tGr^p\cF) \in \cA^{r,r}(B)$ denote the Chern forms (with $\cA^{r,r}$ being the sheaf of smooth $(r,r)$--forms).   Assuming (as we may by passing to the proper extension of the period map) that all monodromy logarithms $N_i\ne 0$, these forms do not have smooth extensions to any larger open set $B \subsetneq B' \subset \olB$.  However, they do define currents on $\olB$ (cf. \cite[(5.23)]{CKS1}).  Using these currents and some subtle properties satisfied by them allows us to argue essentially as in the smooth case (i.e. when $B=\overline B$). 

We let $c_r(\tGr^p\cFe)$ denote these currents.   The notation is justified by the fact that Cattani--Kaplan--Schmid  \cite{CKS1} showed that the first Chern form/current $c_1(\tGr^p\cFe)$ computes the first Chern class of the extension $\tGr^p\cFe = \cFe^p/\cFe^{p-1}$ to $\olB$. Subsequently, Koll\'ar \cite[Theorem 5.20]{Ko2} generalized this statement to all the Chern forms.  

One point that is new here is that the currents $c_r(\tGr^p\cFe)$ may be thought of as admitting well-defined restrictions to smooth forms on the strata $Z_I^*$ of the boundary $\overline B\setminus B$.  This is made precise in Theorem \ref{th:c} below.  It is an interesting corollary that we can multiply the Chern forms (Remark \ref{R:currents}).

To explain our result, we recall that the period maps on strata (see \eqref{PhiI}) endow the  strata $Z_I^*$ with Hodge vector bundles $\cF^p_I$, and the polarizations induce Hermitian metrics on the quotient bundles
\[
  \tGr^p\cF_I = \cF_I^p/\cF_I^{p+1} \ \to \ Z_I^*\,.
\]
Let $c_r(\tGr^p\cF_I) \in \cA^{r,r}(Z_I^*)$ denote the Chern forms.\footnote{There is a choice of polarization on the strata; while the Hermitian metric depends on this choice, the Chern form does not (Remark \ref{R:dfnhI}).}
With these notations, Theorem \ref{th:c} says informally that 
the current $c_r(\tGr^p\cFe)$ has a well-defined restriction to $Z_I^*$, where it agrees with $c_r(\tGr^p\cF_I)$; we express this as
\[
  \left.c_r(\tGr^p\cFe)\right|_{Z_I^*} \ = \ c_r(\tGr^p\cF_I) \,.
\]
  
For the precise statement, reordering the indices if necessary, we assume that $I = \{1,\ldots,k\}$.  Every point $b \in Z_I^*$ admits a local coordinate chart $(t,w): \olU \to \Delta^k \times \Delta^\ell$ so that $Z_i \cap \olU = \{ t_i=0\}$ (with $\Delta \subset \bC$ the unit disc).  In particular, $Z_I^* \cap \olU = \{ t_1,\ldots,t_k = 0 \}$.   Let 
\[
  \left.c_r(\tGr^p\cF)\right|_{\sU} \ = \ 
  \sum_{\mystack{|A|+|C|=r}{|B|+|D|=r}} f^p_{\ell,A,B,C,D}\,
  \td w_A \wedge \td \bar w_B \wedge \td t_C \wedge \td \bar t_D
\]
be the local coordinate expression over $\sU = B \cap \olU$, and let
\begin{equation}\label{E:rhoell}
  \varrho_r(\tGr^p\cF) \ = \ 
  \sum_{|A|, |B|=r} f^p_{\ell,A,B}\,
  \td w_A \wedge \td \bar w_B
\end{equation}
be the part of this expression involving only the $\td w$ and $\td \bar w$'s.  Equivalently, the interior products $i_{\partial/\partial t_i} \varrho_r(\tGr^p\cF)$ and $i_{\partial/\partial\bar t_i} \varrho_r(\tGr^p\cF)$ vanish, and 
\[
   \varrho_r(\tGr^p\cF) \ \equiv \ \left.c_r(\tGr^p\cF)\right|_{\sU}
   \qquad \hbox{ mod } \ \td t_i \,,\ \td \bar t_i \,.
\]
While $\left.c_r(\tGr^p\cF)\right|_{\sU}$ does not extend to a smooth form on $\olU$, the form $\varrho_r(\tGr^p\cF)$ does.

\begin{theorem}\label{th:c}
The form $\varrho_r(\tGr^p\cF) \in \cA^{r,r}(\sU)$ extends to a smooth well-defined form $\varrho_1(\tGr^p\cFe) \in \cA^{r,r}(\olU)$, and 
\[
  \left.\varrho_r(\tGr^p\cFe)\right|_{Z_I^* \cap \olU}
  \ = \ 
  \left. c_r(\tGr^p\cF_I) \right|_{Z_I^* \cap \olU} \,.
\]
\end{theorem}

\noindent The theorem will follow from 
\begin{equation}\label{E:thc}
  \lim_{t_1,\ldots,t_k\to0} \varrho_r(\tGr^p\cF) \ = \
  \left.c_r(\tGr^p\cF_I)\right|_{Z_I^* \cap \olU} \,,
\end{equation}
which is a special case of \eqref{E:cw}.  The existence of the limit and the equality is established in \S\ref{S:IV} by a significant elaboration of the arguments in \cite[\S5]{CKS1} and \cite[\S5]{Ko2}.  Note that the definition \eqref{E:rhoell} of $\varrho_r(\tGr^p\cFe)$ does depend on the choice of coordinates $(t,w)$.  However, the right-hand side of of \eqref{E:thc} is independent of any coordinate choice.

\begin{remark} \label{R:currents}
In general it is not possible to make sense of  the wedge product of two currents.  Theorem \ref{th:c} implies that the Chern forms are an exception: we have a well-defined notion of the wedge product amongst the various $c_r(\tGr^p\cF)$ on $\olB$ that is compatible with cup product in cohomology.  
\end{remark}

Let $
  \Lambda_I \ = \
  \tdet(\mathcal{F}_I^n) \ot \tdet(\mathcal{F}_I^{n-1}) \ot \cdots 
  \ot \tdet(\mathcal{F}_I^{\lceil (n+1)/2\rceil})$,
be the (augmented) Hodge line bundle over $Z_I^*$.  Let $\varrho_1(\Lambda) \in \cA^{1,1}(\sU)$ be the obvious analog of \eqref{E:rhoell} for the Chern form $c_1(\Lambda) \in \cA^{1,1}(B)$ of the Hodge line bundle.

\begin{corollary}\label{cor:c}
The form $\varrho_1(\Lambda) \in \cA^{1,1}(\sU)$ extends to a smooth well-defined form $\varrho_r(\Le) \in \cA^{1,1}(\olU)$, and 
\[
  \left.\varrho_1(\Le)\right|_{Z_I^* \cap \olU}
  \ = \ 
  \left. c_1(\Lambda_I) \right|_{Z_I \cap \olU} \,.
\]
\end{corollary}

\noindent Informally, this means that the current $c_1(\Le)$ admits a well-defined restriction to $Z_I^*$ where it coincides with the Chern form $c_1(\Lambda_I)$.

Our primary applications of Corollary \ref{cor:c}   are  \eqref{E:L|C} and Corollary \ref{C:snef} below (see also \cite{phvb} for some further applications).  Namely,   let $C \subset \olB$ be an irreducible curve, then it follows
\begin{equation}\label{E:L|C}
   \int_C c_1(\Le) \ = \ 
   \mathrm{deg} \left( \left.\Le\right|_C \right) 
   \ \ge \ 0\,,
\end{equation}
and equality holds if and only if $\Phi^0(C)$ is a point.  Keeping the positivity \eqref{SE:c1pos} in mind, Theorem \ref{T:descends} and Corollary \ref{cor:c}, yield

\begin{corollary}\label{C:snef}
Assume that Conjecture \ref{th:a0} holds.  Then $\Le \to \olS$ is strictly nef.
\end{corollary}

\section{Completion of $\wp$ and extended Hodge bundles.}  \label{S:basics}
Throughout the paper, we are in the set-up considered in the introduction; the standard set-up of degeneration of Hodge structure. Namely, we fix a period map $\Phi$ as in \eqref{1.1}, and a simple normal crossing compactification $B\subset \olB$ of the base satisfying additionally that the local monodromies around the boundary divisors are unipotent. As before, let $\wp= \Phi(B)$ be the image of period map. 

In this review section, we discuss the relative (to $\olB$) completion $\olP$ and its Stein factorization cover $\hat \wp{}^0\to \olP$. It is a consequence of the theory of degenerations of Hodge structure (esp. \cite{CKS1}) that  $\olP$ (and similarly $\hat\wp{}^0$) is a stratified topological space with the strata $\wp^0_W$ being again images of period maps. The ultimate goal of our study is to show that $\olP$ has an algebraic structure and that $\Phi$ extends to an algebraic morphism  $\Phie$ \eqref{defPext}. In this direction, we review the augmented Hodge bundle $\Lambda$ on $B$ and its extension $\Le$ on $\olB$. It was known since the seventies, that $\Lambda$ satisfies strong positivity properties on $\wp$. More recently,  Bakker--Brunebarbe--Tsimerman \cite{BBT18} proved that $\wp$ is indeed quasi-projective (using $o$-minimality theory) and (that the descent of) $\Lambda$ is ample on $\wp$ (using positivity arguments). Similarly, the strata $\wp^0_W$  (and $\hat\wp^0_W$) occuring in the construction of $\olP$ (and $\hat\wp{}^0$ respectively) are themselves quasi-projective (cf.~discussion following \eqref{SE:c1pos}). Our goal is to glue all these pieces together into an projective variety $\olP$ (cf. Conjecture \ref{conj:a}). A first step in this direction is to 
 show that (a power of) $\Le$ descends to a line bundle on $\hat\wp{}^0$. The descent property is a consequence of the analysis undertaken  in \cite{GGRinfty} of the asymptotic behavior of the period map $\Phi$ in a neighborhood of a $\hPhie$--fibre.  In that work it is essential that those fibres are both compact and connected. In particular, the results there do not yield descent to $\olP$.

This section is complemented by Section \ref{S:rvw} of the Appendix, where we collect the  more technical aspects of  the asymptotics of the period maps. Those are particularly relevant later on when we discuss the properties of the Chern forms.

\subsection{Stratified, set theoretic completion of $\Phi$} 
With notations as above, we let $Z=\olB\setminus B$ be the boundary divisor, and we denote by $Z_i$ the irreducible components of $Z$, and define 
\[
  Z_I \ := \ \bigcap_{i\in I}Z_i.
\]
This in turn defines a stratification of $\ol B$ by setting 
\[
  Z^\ast_I \ := \ Z_I \backslash \left( \cup_{|J|>|I|}\, Z_J \right),
\]
the open strata obtained by removing from $Z_I$ the lower dimensional sub-strata.  By convention, the top stratum $Z_{\emptyset}^*$ corresponds to $B$, i.e. $Z_{\emptyset}^*=B$. 

It is a consequence of the nilpotent orbit theorem \cite{Sc} that over each open stratum $Z^\ast_I$ we have a variation of nilpotent orbits.  Passing to the ``associated weight-graded quotient'' we obtain period mappings
\begin{equation}\label{PhiI}
   \Phi^0_I:Z^\ast_I\to\Ga_I\bsl D^0_I \,,
\end{equation}
(see \S\ref{S:PhiI} for further details). In \cite{PoI3} it is proved that period maps can be extended to proper holomorphic maps; simply, we can assume that $B\subset \olB$ is extended to contain all the strata $Z_{\{i\}}^*$ with trivial monodromy.
The same idea can be used to patch together the period maps $\Phi^0_I$ on strata to obtain proper holomorphic maps. Namely, along each strata $Z_I^*$ there is a well-defined $\Gamma$--congruence class of weight filtrations $[W^I]$.  If we define
\[
  Z_W \ = \ \bigcup_{[W^I] = [W]} Z_I^* \,,
\]
then the maps \eqref{PhiI} with $Z_I^* \subset Z_W$ define a proper holomorphic map
\[
  \Phi^0_W : Z_W \ \to \Gamma_W \bsl D^0_W \,.
\]
On any given substrata $Z_I^* \subset Z_W$, we have a commutative diagram
\[
\begin{tikzcd}
  Z_I^* \arrow[r,"\Phi^0_I"] \arrow[d,hook] & \Ga_I\bsl D^0_I \arrow[d] \\
  Z_W \arrow[r,"\Phi^0_W"] & \Ga_W\bsl D^0_W \,,
\end{tikzcd}
\]
and the fibres of $\Phi^0_I(Z_I^*) \sur \Phi^0_W(Z_I^*)$ are finite.
The properness of $\Phi^0_W$ implies that
\begin{equation}\label{def-stratapw}
  \wp^0_W \ := \ \Phi_W^0(Z_W) \ \subset \ \Gamma_W \backslash D_W
\end{equation}
is a complex analytic variety.  We refer to \cite[\S2.3]{GGRinfty} for further details and discussion. 

Glueing everything together, we obtain 
\[
  \olP \ := \ \bigcup_W \wp^0_W
\]
and a  set-theoretical extension 
\[
  \Phie : \olB \ \to \ \olP \,,
\]
of the period map $\Phi : B \to \wp$  by seting $\left.\Phie\right|_{Z_W} := \Phi^0_W$.  By construction, when restricting to the strata $Z_W$, we are in the analytic category and the morphisms  are proper holomorphic.  See \cite[\S2.4]{GGRinfty} for further details and discussion. 

\subsection{The Stein factorization $\olS\to\olP$} \label{S:stein}
As already noted, it is important in our arguments to work with period maps with connected fibers. As usual, this can achieved by passing to the Stein factorization (e.g. see \cite{GR84})
\begin{equation}\label{E:hatPhi}
\begin{tikzcd}
  B \arrow[r,"\hat\Phi"'] \arrow[rr,bend left=20,"\Phi"]
  & \hat\wp \arrow[r] & \wp
\end{tikzcd}
\end{equation}
of $\Phi$, which was previously arranged to be proper.  Here $\hat\wp = \hat\Phi(B)$ is a normal complex analytic variety, the fibres of $\hat\Phi : B \to \hat\wp$ are connected, while those of $\hat\wp \to \wp$ are finite.  Similarly, for each $Z_W$ stratum, we consider the corresponding  Stein factorizations
\begin{equation}\label{E:steinW}
\begin{tikzcd}
  Z_W \arrow[r,"\hat\Phi{}^0_W"'] \arrow[rr,bend left=20,"\Phi^0_W"]
  & \hat\wp{}^0_W \arrow[r] & \wp^0_W
\end{tikzcd}
\end{equation}
of $\Phi^0_W$. Again, the $\hat\wp{}^0_W$ are normal complex analytic varieties, the fibres of $\hat\Phi{}^0_W : Z_W \to \hat\wp{}^0_W$ are connected, and the fibres of $\hat\wp{}^0_W \sur \wp^0_W$ are finite.  Taking the union
\[
  \olS \ := \ \bigcup_W \hat\wp{}^0_W \tand 
\]
we obtain set-theoretically the extension
\[
  \hPhie : \olB \ \to \ \olS \,,
\]
of the map $\hat\Phi: B \to \hat\wp$, also defined strata-wise by \eqref{E:steinW}.   This yields a ``Stein factorization''
\begin{equation}\label{E:stein}
\begin{tikzcd}
  \olB \arrow[r,"\hPhie"'] \arrow[rr,bend left=20,"\Phie"] 
  & \olS \arrow[r] & \olP\,,
\end{tikzcd}
\end{equation}
of the extended period map $\Phie:\olB\to \olP$.  By construction the map $\olS \sur \olP$ is finite, and the restriction to strata are honest Stein factorizations.   See \cite[\S2.5]{GGRinfty} for further details and discussion. 

\subsection{Topological completions} 
The asymptotic behavior of the period maps guarantees the fact both $\olP$ and $\olS$ admit a compact Hausdorff topology with the following properties:
\begin{itemize}
\item[(i)] $\wp \subset \olP$ and $\hat\wp \subset \olS$ are dense open subsets; 
\item[(ii)] the induced subspace topology on $\wp^0_W$ and $\hat\wp{}^0_W$ coincides with the topology on these spaces as complex analytic varieties;
\item[(iii)] the maps in \eqref{E:stein} are all continuous and proper.  
\end{itemize}
The assertions (i) and (ii) above are established for $\olP$ in \cite[Proposition 2.25]{GGRinfty}; the argument there applies in a straightforward manner to $\olS$; and (iii) is immediate.

The topological space $\olP$ is our proposed SBB type compactification for general images of period maps. We expect $\olP$ to be an analytic space, and even algebraic (Conjecture \ref{conj:a}). Here, we only note that the cover $\olS$ is more amenable to analysis. For instance, assuming $\olS$ has an analytic structure (Conjecture \ref{th:a0}), then $\olS$ is automatically normal, while of course $\olP$ (and even $\wp$) does not need to be so. Our general strategy is to study $\olS$ and then use descent techniques (such as those of \cite{kollar_quot}) to get a hold on $\olP$.


\subsection{Hodge vector bundles} \label{S:eaHb}
The algebraicity of period maps is a consequence of strong positivity properties of Hodge bundles. Here, we review the basic set-up as relevant to us. To start, let 
\[
  \cF^n \,\subset\, \cF^{n-1} \,\subset \cdots \subset\, 
  \cF^1 \,\subset\, \cF^0
\]
denote the Hodge vector bundles over $B$.  Under the assumption that the local monodromies around the branches $Z_i$ of $Z$ are unipotent with logarithms $N_i$, it is well known (\cite{CKS1} and \cite{PS}) that there are canonical extensions 
\[
  \cFe^n \,\subset\, \cFe^{n-1} \,\subset \cdots \subset\, 
  \cFe^1 \,\subset\, \cFe^0
\]
of the Hodge vector bundles to $\olB$.  

\begin{definition}\label{dfn:ahlb}
The (\emph{augmented}) \emph{Hodge line bundle}\footnote{For consistency with the existing literature, we point out that in \cite{BBT18} the Hodge line bundle is called {\it Griffiths bundle}.} (HLB) over $B$ is 
\begin{equation}\label{E:ahlb}
  \Lambda \ = \
  \tdet(\mathcal{F}^n) \ot \tdet(\mathcal{F}^{n-1}) \ot \cdots 
  \ot \tdet(\mathcal{F}^{\lceil (n+1)/2\rceil}) \,,
\end{equation}
and its extension to $\olB$ is denoted 
\begin{equation}
  \Le \ = \
  \tdet(\cFe^n) \ot \tdet(\cFe^{n-1}) \ot \cdots 
  \ot \tdet(\cFe^{\lceil (n+1)/2\rceil}) \,.
\end{equation}
\end{definition}
\begin{remark}
The Hodge bundle (i.e.~ $\tdet(\mathcal{F}^n)$) is more familiar in moduli theory. For VHS of weight $1$ or $2$, there is no difference between the augmented Hodge bundle and the Hodge bundle (compare Remark \ref{rem:ksba}). 
\end{remark}

The key standard fact about the augmented Hodge bundle is that the Chern form $c_1(\Lambda)$ is semi-positive. In fact, given $\xi \in TB$ we have 
\begin{subequations}\label{SE:c1pos}
\begin{equation}
  c_1(\Lambda)(\xi,\overline\xi) \ = \ \| \Phi_\ast(\xi)\|^2 \,,
\end{equation}
cf.~\cite[(7.15)]{PoI3}.  Note that \emph{$c_1(\Lambda)$ is positive on on $B$ if and only if $\Phi_*$ is everywhere injective}.  Similarly, for each boundary stratum, we have 
\begin{equation}
c_1(\Lambda_I)(\xi,\overline\xi) \ = \ \| (\Phi^0_I)_\ast(\xi)\|^2 \,,
\end{equation}  
\end{subequations}
for all $\xi \in TZ_I^*$.  

These positivity properties of $\Lambda$ (and $\Le$) have strong consequences. In particular, Bakker--Brunebarbe--Tsimerman \cite[\S5]{BBT18} proved that $\Lambda$ descends to an ample line bundle on the image of the period map $\wp$. Similarly, the restrictions of $\Le$ to the strata $Z_W$ will descend to ample line bundles on the strata $\wp^0_W$. Now, if we would know that  $\Le$ is semi-ample, then we could define a closure of the image of the period map by $\overline{\wp}:= \mathrm{Proj}R(\olB,\Le)$, and this space would fit in the diagram \eqref{diag-BBT} mentioned in the introduction. For now, we only note that the extended Hodge bundle $\Le$ descends\footnote{We expect to have an appropriate descent also to $\olP$, however our argument depends on the connectedness of the fibers of the period map, which is true only for $\olS$.} to a (topological) line bundle on $\olS$. This is a consequence of \cite{GGRinfty} which analyzes the behavior of period maps in a neighborhood of a fiber. 

\begin{theorem}\label{T:descends0}
The line bundles $\tdet(\cFe^p)^{m_p} \to \olB$ are trivial on the connected fibres of $\hPhie$, for some $m_p \ge 1$, and descend to lines bundle on $\olS$.  
\end{theorem}

\begin{proof}
Let $A \subset \olB$ be a fibre of $\hPhie$.  The theorem is a corollary of \cite[Theorem 3.12]{GGRinfty} which asserts that the point $\hPhie(A) \in \olS$ admits a neighborhood $\hat\sO \subset \olS$ with the property that $\tdet(\cFe^p)^{m_p}$ is trivial over $\overline{\sO} = (\hPhie)^{-1}(\hat\sO) \subset \olB$, for some $m_p = m_p(\hat\sO) \ge 1$.  Since $\olS$ is compact, we may choose $m_p(\hat\sO)$ independent of the neighborhood.  
\end{proof}

\noindent Theorem \ref{T:descends0} immediately yields

\begin{corollary}\label{C:descends}
The line bundle $\Le^m \to \olB$ is trivial on the connected fibres of $\hPhie$, for some $m \ge 1$, and descends to a line bundle on $\olS$.
\end{corollary}

\begin{remark}
If $\Gamma$ is neat, we may take $m, m_p = 1$.  We will assume this is the case from this point on to reduce notational  clutter.  The reader who does not which to make this assumption may simply replace all instances of $\Le \to \olS$ with $\Le^m \to \olS$ without compromising the arguments that follow.
\end{remark}

\section{Proofs of the algebraicity results} \label{S:3pfs}
The technical core of the paper is Section \ref{S:IV}, where new properties of the Chern forms (applying to the compact $\olB$) are established. Here, assuming those results (as stated in \S\ref{S:chern} of the introduction), we prove the main Theorem \ref{th:b}. As a warm-up, we discuss the two dimensional case (Theorem \ref{T:dimB=2}), which follows by elementary consideration (from the positivity of Hodge bundle), illustrating nonetheless  some of the special properties satisfied by images of period maps. 

\subsection{The case that $B$ is a surface} \label{S:prfdimB=2}
In this subsection, we assume $\tdim\,B = 2$ and that $\Phi_*$ is one-to-one on $B$.

Since $\tdim\,B = 2$, the boundary divisors $Z_i$ are smooth, irreducible curves meeting transversally.  The extended period map $\Phie$ (cf. \eqref{defPext}) maps $Z_i$ to either a point or a curve.  Let
\[
  Z' \ := \ \sum_{\Phie(Z_i) = \mathrm{pt}} Z_i
  \ = \ \sum_{i=1}^m Z_i
\]
be the union of those $Z_i$ that are mapped to a point.

\begin{lemma}\label{T:(5)}
The intersection matrix $\|Z_i\cdot Z_j\|_{i,j=1}^m$ is negative definite.
\end{lemma}

\begin{proof}
By Theorem \ref{th:c}, the current $c_1(\Le)$ represents the Chern class of the augmented Hodge line bundle $\Le \to \olB$.  It follows from \eqref{SE:c1pos} that 
\begin{sblist}
\item $c_1(\Le) \ge 0$, and 
\item $\left.c_1(\Le)\right|_{Z_i} = 0$, for $i=1,\ldots,m$, so that $\Le \cdot Z_i = 0$.
\end{sblist}
Additionally, the assumption that $\Phi_*$ is one-to-one implies that $c_1(\Le)^2 > 0$.    It follows that $\Le$ lies in the positive cone in $\mathrm{Pic}(\olB)$.  We now infer the lemma from the Hodge index theorem.
\end{proof}

\begin{proof}[Proof of Theorem \ref{T:dimB=2}] Given the lemma, a result of Grauert \cite{Gra62} asserts that $Z'$ may be contracted to normal singular points on a complex analytic space $Y$. 
\end{proof}

\begin{remark}
In the classical case that the Mumford--Tate domain is Hermitian symmetric, the SBB compactification $\GsD^*$ of $\GsD$ is a normal projective variety.  Borel's extension theorem yields the morphism $\Phie : \olB \to (\GsD)^*$.  By hypothesis this morphism contracts $Z'$ to a set of points.  Lemma \ref{T:(5)} then follows from a result of Mumford \cite{Mum61}.
\end{remark}

\subsection{Sufficient conditions for semiample} \label{S:prfw!=0}

\begin{theorem}\label{T:w!=0}
Suppose that $c_1(\Lambda)$ is positive on $B$, and that $0 \le -K_{\olB} \cdot C$ whenever $\Phie$ collapses the curve $C \subset \olB$ to a point.  If the effective cone of 1-cycles $\mathrm{Eff}^1(\olB)$ is finitely generated,\footnote{We anticipate that this hypothesis can be removed.} then $\Le \to \olB$ is semiample.
\end{theorem}

\begin{proof}  Suppose that $c_1(\Le)$ is positive on on $B$ (equivalently, $\Phi_*$ is everywhere injective by \eqref{SE:c1pos}), and that $0 \le -K_{\olB} \cdot C$ whenever $\Phie(C)$ is a point.  In order to apply the base point free theorem \cite{KM} to show that $\Le \to \olB$ is free we need to show that
\begin{i_list}
\item \label{i:nef}
$m \Le - K_{\olB}$ is nef for $m\gg0$, and 
\item \label{i:big}
$m \Le - K_{\olB}$ is big for $m\gg0$.
\end{i_list}

We begin with nef.  If $\Phie(C)$ is a point, then $(m \Le - K_{\olB}) \cdot C = - K_{\olB} \cdot C \ge 0$ by assumption. Suppose that $\Phie(C)$ is a curve.  Then \eqref{E:L|C} and Theorem \ref{th:c} imply $(m \Le - K_{\olB}) \cdot C > 0$ for $m \ge m_o(C)$.  The hypothesis that $\mathrm{Eff}^1(\olB)$ is finitely generated allows to choose $m_o \ge m_o(C)$ for all curves $C$ (that are not contracted by $\Phi^0$).  This establishes \ref{i:nef}.

To prove bigness, Theorem \ref{T:big} (Demailly--Siu solution of the Grauert--Riemenschneider Conjecture) asserts that it suffices to show that at some point $b \in B$ we have $c_1(m \Le - K_{\olB})^d > 0$ for $m\gg0$ and with $d=\mathrm{dim}\,\olB$.  This follows from our assumption on the positivity of $c_1(\Le)$.
\end{proof}

\begin{remark}
In \cite{GGRlb} it is conjectured that under suitable local Torelli-type assumptions there exist integers $0 \le a_i \in \bZ$ and $m_0$ so that $m\Le - \sum a_i[Z_i]$ is ample for $m \ge m_0$.  And it is shown that the conjecture holds in two cases: when $Z = Z_1$ is irreducible; and when $\tdim\,B=2$.
\end{remark}

\begin{theorem}[Demailly {\cite{Dem85}}, Siu {\cite{Siu84, Siu85}}] \label{T:big}
Let $L \to X$ be a line bundle on a compact complex manifold of dimension $d$.  Suppose $L$ admits a smooth Hermitian metric with the property that the Chern form $\w$ is non-negative ($\w\ge0$) everywhere, and there exists a point $x \in X$ at which $\w^d > 0$.  Then $L$ is big.
\end{theorem}

\subsection{Proof of ampleness of extended augmented Hodge line bundle} \label{S:V}

The goal of this section is to prove Theorem \ref{th:b} provided that $\hat\wp{}^0$ admits the analytic structure of Conjecture \ref{th:a0}. Specifically, we show  that the augmented Hodge line bundle $\Le \to \hat\wp{}^0$ of Corollary \ref{C:descends} is ample.   With some important differences to be explained below, the argument will follow the overall strategy, and use a number of the technical points, of the proof of \cite[Theorem 1.1]{BBT18}. Our new ingredient is the properties of the Chern form $c_1(\Le)$ that are outlined in \S\ref{S:chern}. 

\subsubsection{Ample line bundles} \label{S:ample}
We briefly review the relevant notions of positivity for our situation. Throughout, we work with line bundles $L\to X$ over (possibly non-reduced) compact analytic varieties.
\begin{definition}
The line bundle $L \to X$ is:
\begin{sblist}
\item
\emph{ample} if for every cohorent sheaf $\sF \to X$ there exists $m_o(\sF)$ so that $H^q(X,\sF\ot L^m) = 0$ for all $q>0$ and $m \ge m_o(\sF)$;
\item
\emph{semi-ample} if there exists $0 < m \in \bZ$ so that the restriction $H^0(X,L^m) \to L_x^m$ is surjective for all $x \in X$;
\item 
\emph{big} if $h^0(X,L^m) = C m^d + \cdots$ for some $C > 0$ and where $d = \tdim\,X$ (here we assume $X$ is irreducible);
\item 
\emph{strictly nef} if for any curve $C \subset X$ we have $\mathrm{deg}( \left.L\right|_{C_\mathrm{red}} ) > 0$.
\end{sblist}
\end{definition}

We make the following observation.

\begin{lemma}\label{L:ample}
If $L \to X$ is semi-ample and strictly nef, then $L \to X$ is ample.
\end{lemma}

\begin{proof}
For $m\gg0$ the complete linear system $|mL|$ induces a morphism $\psi : X \to \bP^N$ so that $\psi^*(\cO_{\bP^N}(1)) = L^m$.  It follows that no curve $C \subset X$ is contracted to a $0$--dimensional subscheme of $\bP^N$.  So $\psi$ is a finite map.  Using the Leray spectral sequence one may infer that $L \to X$ is ample.
\end{proof}

\subsubsection{Proof of Theorem \ref{th:b}} \label{S:bbtmod}
Assuming $\olS$ is a compact analytic variety, we want to show that $\Le \to \olS$ is ample.  By Lemma \ref{L:ample} it suffices to show that $\Le \to \hat\wp{}^0$ is both semi-ample and strictly nef. Corollary \ref{C:snef}  establishes that $\Le$ is stricly nef on $\hat\wp{}^0$. It remains to establish the semi-ampleness of $\Lambda_e$ by producing enough sections. The general principle is that the positivity properties of the Hodge bundle lead to big line bundles, and thus many sections. We combine this with an inductive argument to obtain the conclusion.

\subsubsection*{Step 1: reduced}

We may assume that $\hat\wp{}^0$ is reduced, cf.~ Step 1 of \cite[pp.~26]{BBT18}.

\subsubsection*{Step 2: big}

Assuming $\hat\wp{}^0$ is reduced, the line bundle $\Le \to \hat\wp{}^0$ is big.  This is a consequence of positivity of Hodge bundles; a similar argument is given in  \cite[Lemma 6.4]{BBT18}. Take a desingularization $\pi : \tilde\wp{}^0 \to \olS$.  Then the first Chern class of the pullback $\pi^*\Le \to \tilde\wp{}^0$ is represented by a pseudoeffective $(1,1)$--current by \cite[Theorem 5.20]{Ko2}.  The line bundle $\pi^*(c_1(\Le))$ is big if and only if 
\[
   0 \ < \  
   \int_{\tilde\wp{}^0} \pi^*(c_1(\Le))^{\hat d} \,,
   \quad \hat d = \tdim\,\hat\wp \,,
\]
by \cite[Theorem 1.2]{Bouck}, and this inequality may be inferred from \S\ref{S:chern}.

\subsubsection*{Step 3: semi-ample}

To show that $\Le \to \olS$ is semi-ample we have to produce sections.  As in the proof of \cite[Theorem 6.2]{BBT18}, this is done by induction on dimension.  For this we need to produce a $Y \subset |m\Le|$.  By the inductive step $\Le \to Y$ is semi-ample.  Fujita vanishing yields a short-exact sequence
\[
  H^0( \olS , m\Le ) \ \to \ H^0(Y,m\Le) \ \to \ 0 \,.
\]
Thus sections of $m\Le \to \olS$ span the fibres of $\Le\to Y$.  Since $Y = \{ s=0 \}$ for some $s \in H^0(\olS,m\Le)$, we have $s(x) \not=0$ for any $x \not\in Y$.

\section{Asymptotic behavior of Chern forms} \label{S:IV}

In this section we give a proof of Theorem \ref{th:c} by establishing the limit \eqref{E:cw} (of which \eqref{E:thc} is a special case).

\subsection{Formulation of the result}  \label{S:formulation}

Associated to the bundles $\tGr^p\cF_I \to Z_I^*$ are principal bundles with fiber group 
\[
  \cH^p_I \ = \ \prod_{q=0}^p \tAut(H^{p-q,0}_I(-q)) \ \times \ 
  \prod_{q=p+1}^n \tAut( H^{0,q-p}_I(-p))
  \ = \ 
  \prod_{q=0}^n \tGL( h^{p,q}_I , \bC) \,.
\]
(This discussion includes $\tGr^p\cF \to B$ with $I = \emptyset$.)  Let
\[
  \fh^p_I \ = \ \bigoplus_{q=0}^p \tEnd(H^{p-q,0}_I(-q)) \ \op \ 
  \bigoplus_{q=p+1}^n \tEnd(H^{0,q-p}_I(-p))
  \ = \ 
  \bigoplus_{q=0}^n \mathrm{Mat}(h^{p,q}_I , \bC)
\]
denote the Lie algebra of $\cH^p_I$; here $\mathrm{Mat}(h,\bC)$ is the algebra of $h \times h$ complex matrices.  Let $\bC[\fh^p_I]$ be the algebra of $\bC$--polynomials on $\fh^p_I$, and let 
\[ 
  \mathfrak{P}^p_I \ = \ \bC[\fh^p_I]^{\cH^p_I} \ := \ 
  \left\{
  P \in \bC[\fh^p_I] \ : \ P(X) = P(\tAd_g(X)) \ \forall \ X \in \fh^p_I \,,\ g \in \cH^p_I
  \right\}
\]
be the subalgebra of polynomials that are invariant under the induced action of $\tAd(\cH^p_I)$.  A polarization of the variation of Hodge structure $\Phi_I^0 : Z_I^* \to \Gamma_I\bsl D_I^0$ induces a Hodge metric $h^p_I$ on $\tGr^p\cF_I \to Z_I^*$.  Let $\Theta^p_I$ denote the associated curvature form.  We have the Chern--Weil homomorphism $\mathfrak{P}^p_I \to H^\sb(Z_I^*,\bC)$ mapping $P \mapsto [P(\Theta^p_I)]$.

The extended Hodge bundles $\cFe^p \to \overline B$ induce 
\begin{equation}\label{E:cwinc}
  \cH^p_J \ \inj \ \cH^p_I \quad \hbox{ for all } \quad I \subset J \,,
\end{equation}
(cf.~\S\ref{S:Icoord}) so that $\fP_I^p \subset \fP_J^p$ for all $I \subset J$.  Keeping in mind that $B = Z_\emptyset^*$, we write $\tGr^p\cF = \tGr^p_\emptyset \cF$, $\fP^p = \fP^p_\emptyset$ and $\Theta^p = \Theta^p_\emptyset$.  With this notation we have
\[
  \fP^p \ \subset \ \fP^p_I \quad \hbox{ for all } I \,.
\]  

Fix $P \in \fP^p$.  Without loss of generality $P$ is a homogeneous polynomial of degree $r$ so that $P(\Theta^p) \in \cA^{r,r}(B)$.  Reordering the indices if necessary, we assume that $I = \{1,\ldots,k\}$.  Every point $b \in Z_I^*$ admits a local coordinate chart $(t,w): \olU \to \Delta^k \times \Delta^\ell$ so that $Z_i \cap \olU = \{ t_i=0\}$.  In particular, $Z_I^* \cap \olU = \{ t_1,\ldots,t_k = 0 \}$.  Let 
\[
  \left.P(\Theta^p)\right|_{\sU} \ = \ 
  \sum_{\mystack{|A|+|C|=r}{|B|+|D|=r}} f^p_{\ell,A,B,C,D}\,
  \td w_A \wedge \td \bar w_B \wedge \td t_C \wedge \td \bar t_D
\]
be the local coordinate expression over $\sU = B \cap \olU$, and let
\begin{equation}\label{E:rho}
  \varrho \ = \ 
  \sum_{|A|, |B|=r} f^p_{\ell,A,B}\,
  \td w_A \wedge \td \bar w_B
\end{equation}
be the part of this expression involving only the $\td w$ and $\td \bar w$'s.  Equivalently, the interior products $i_{\partial/\partial t_i} \varrho$ and $i_{\partial/\partial\bar t_i} \varrho$ vanish, and
\begin{equation}\label{E:rhomod}
   \varrho \ \equiv \ \left.P(\Theta^p)\right|_{\sU}
   \qquad \hbox{ mod } \ \td t_i \,,\ \td \bar t_i \,.
\end{equation}
While $\left.P(\Theta^p)\right|_{\sU}$ does not extend to a smooth form on $\olU$, the form $\varrho$ does.  The purpose of this section is to prove

\begin{equation}\label{E:cw} 
  \lim_{t_1,\ldots,t_k\to0} \varrho 
  \ = \ \left.P(\Theta^p_I)\right|_{Z_I^* \cap \olU} \,.
\end{equation}

\subsubsection*{Outline of the proof of \eqref{E:cw}}  

This will be a consequence of the analysis of asymptotics utilized by Cattani--Kaplan--Schmid to establish their estimates for the Hodge metric \cite[\S5]{CKS1}.  Given a nondegenerate Hermitian matrix $h = (h_{ab}(t,w))$, define $h^{ab}(t,w)$ by $h^{ac} h_{bc} = \d^a_b$.  The associated curvature matrix is given by $\Theta[h] = (\Theta[h]^a_b)$ with
\[
  \Theta[h]^a_b \ := \ \bar\partial 
  \left( h^{ac} \cdot \partial h_{bc} \right)\,.
\]
The hypothesis $P \in \fP^p$ enters as follows:  Let 
\begin{subequations}\label{SE:rhoh}
\begin{equation}
  \varrho[h] \ = \ (\varrho[h]^a_b)
\end{equation}
be the obvious analog
\begin{equation}
  \varrho[h]^a_b \ := \ - \sum_{i,j} \partial_{\overline w_j} 
  \left( h^{ac} \cdot \partial_{w_i} h_{bc} \right) \, \td w_i \wedge \td \overline w_j
\end{equation}
\end{subequations}
of \eqref{E:rho} for the curvature form $\Theta[h]$.  Then if $A(t) = (A^a_b(t))$ and $B(t) = (B^a_b(t))$ are invertible matrices, and
\begin{subequations}\label{SE:P}
\begin{equation}
  \tilde h_{ab}(t,w) \ = \ A^c_a(t)\,h_{cd}(t,w)\,\overline{B^d_b(t)} \,,
\end{equation}
we have $\varrho[\tilde h]^a_b = (A^{-1})^a_c \,\varrho[h]^c_d \,A^d_b$, so that 
\begin{equation}
  P \big( \varrho[h] \big) \ = \ 
  P \big( \varrho[\tilde h] \big)
  \quad \hbox{for all} \quad P \in \fP^p
\end{equation}
\end{subequations}
by definition of $\fP^p$.  This is important because the metric $h$ blows-up as we approach the divisor $Z$ at infinity.  The several-variable $\tSL(2)$--orbit theorem provides a method for replacing $h$ with an $\tilde h$ that is bounded at infinity (\S\ref{S:lim1}).  One may then argue, via repeated applications of \eqref{SE:P}, that \eqref{E:cw} holds (\S\ref{S:lim2}).  More precisely, we fix a sequence $t_\mu = (t_{\mu 1} , \ldots , t_{\mu k}) \in \Delta^{*k}$ converging to $0$.  Writing $\ell(t_{\mu j}) = z_{\mu j} = x_{\mu j} + \bi y_{\mu j}$, we have $y_{\mu j} = -\tfrac{1}{\pi} \log |t_{\mu j}|$.  Restricting to a subsequence if necessary (and dropping the subscript ${}_\mu$), we may assume without loss of generality that either $y_i/y_j \to 0$, $y_i /y_j \to \infty$, or $y_i/y_j$ is bounded (away from both $0$ and $\infty$).  Reordering indices if necessary, we may assume that there exists $K = \{ k_1 , \ldots , k_\rho \} \subset \{1,\ldots , k\} = I$ so that $y_{k_\a}/y_{k_{\a+1}} \to \infty$ and $y_j/y_{k_\a}$ is bounded for all $k_{\a-1} < j < k_\a$.  In \S\ref{S:ckssl2} we review a collection of commuting $\tSL(2)$'s that is well-suited to studying the asymptotic behavior of $\Theta[h^p](t,w)$ under such a sequence.  The key tool here is a semisimple automorphism $\e(t,w)$ associated with those $\tSL(2)$'s, cf.~\S\ref{S:esp}, and it is the asymptotic behavior of $\tAd_{\e(t,w)}$, and its eigenvalues, that yields the bounded $\tilde h$, cf.~Lemma \ref{L:key1}.

The remainder of \S\ref{S:IV} is occupied with the proof of \eqref{E:cw}.  After recalling the local coordinate expressions for the metrics on the Hodge bundles in \S\ref{S:IVprelim}, \eqref{E:cw} is proved in \S\ref{prf:cw}.

\subsection{Review of the Hodge bundles and their curvature}   \label{S:IVprelim}

We begin by reviewing the bundles $\tGr^p_I\cF$ and their curvature forms $\Theta^p_I$, making use of the notations in \S\S\ref{S:Dsplit}--\ref{S:horiz}.

\subsubsection{The metric on the quotient Hodge vector bundle $\tGr^p\cF$}

Fix a set $\{ \sfv_a \} \subset \tilde F_0^p$ of linearly independent vectors with the property that $\{ \sfv_a \ \hbox{mod} \ \tilde F_0^{p+1} \}$ forms a basis of $\tGr^p\tilde F_0 = \tilde F^p_0/\tilde F^{p+1}_0$.  With the notations of \S\ref{S:Dsplit}, we set
\[
  \eta(t) \ := \ 
  \exp\left(\bi \,\d_0 \ + \ \textstyle\sum_{i=1}^k \ell(t_i) N_i \right) \,.
\]
Then 
\[
  v_a(t,w) \ := \ \eta(t) \zeta(t,w) \,\sfv_a
\]
is naturally identified with a local holomorphic framing of the quotient Hodge vector bundle $\tGr^p\cF \to \sU$.  Let $\{ v^a(t,w)\}$ denote the dual coframing.  Then the Hermitian metric 
\begin{subequations}\label{SE:hFn}
\begin{equation}
  h(t,w) \ = \ h_{ab}(t,w) \, v^a \ot \bar v^b
\end{equation}
on $\tGr^p\cF$ is given by
\begin{equation} 
  h_{ab}(t,w) \ := \  (\bi)^n Q\left( v_a(t,w) \,,\,
  \overline{ v_b(t,w) }\right) \,.
\end{equation}
\end{subequations}
Define $h^{ab}(t,w)$ by 
\[
  h^{ab}(t,w)\,h_{ac}(t,w) \ := \ \d^b_c \,.
\]
The curvature form on the quotient Hodge vector bundle $\tGr^p\cF \to \sU$ is
\begin{equation}\nonumber 
  \Theta^p \ = \ 
  \bar\partial \left( h^{ac} \cdot \partial h_{bc} \right) \,v_a \ot v^b\,.
\end{equation}

\subsubsection{The metric the quotient Hodge vector bundle $\tGr^p_I\cF$}\label{S:Icoord}

Let $X^{p,q}_0$ denote the component of $X$ taking value in $\tilde\fg^{p,q}_0$ (cf.~\S\ref{S:Dsplit}).  Define $X_I(w) = \op\,X^{-p,p}_0(0,w)$, and set 
\begin{equation}\label{E:zetaI}
  \zeta_I(w) \ := \
  \exp\,X_I(w) \,.
\end{equation}
The function $X_I(w)$ is the component of $X(0,w)$  taking value in $\tilde \fz_\ell(0)$ (cf.~\S\ref{S:horiz}).  In particular, both $X_I(w)$ and $\zeta_I(w)$ commute with the $\{ N_j\}_{j=1}^k$.

Recall that $\tilde F^p_0 = \op_{r\ge p} \tilde I^{r,\sb}_0$, so that $\tGr^p\tilde F_0 \simeq \tilde I^{p,\sb}_0 = \op_q \tilde I^{p,q}$.  Refine the set $\{\sfv_a\}$ so that $\sfv_a \in \tilde I^{p,q}_0$ for some $q = q(a)$.  (It is this refined basis that gives us the inclusion \eqref{E:cwinc}.)  Then 
\[
  v^I_a(w) \ := \ \zeta_I(w)\,\sfv_a
\]
defines a local holomorphic framing of the quotient Hodge vector bundle $\tGr^p_I\cF \to \Delta^\ell$.  Let $v^a_I(w)$ denote the dual coframing.  Set 
\begin{equation}\label{E:chN}
  N \ := \ N_1 + \cdots + N_k \tand 
  W \ := \ W(N) \,.
\end{equation}
Then the Hermitian metric
\begin{subequations}\label{SE:hFnI}
\begin{equation}
  h_I(w) \ = \ h^I_{ab}(w) \, v_I^a \ot v_I^b
\end{equation} 
on $\tGr^p_I\cF$ is given by
\begin{equation}
  h^I_{ab}(w) \ = \ 
  \left\{\begin{array}{ll}
    (\bi)^{n-q} \, Q\left( \zeta_I(w) \sfv_a \,,\, 
    N^q \,\overline{\zeta_I(w) \sfv_a} \right) \,,\quad & 
  q = q(a) = q(b) \,,\\
  0 \quad & q(a) \not= q(b)\,.
  \end{array}\right.
\end{equation}
\end{subequations}
Defining $h^{ab}_I(w)$ by 
\[
  h^{ab}_I(w) \, h^I_{ac}(w) \ = \ \d^b_c \,,
\]
the curvature form of $\tGr^p_I\cF \to \Delta^\ell$ is
\begin{equation}\nonumber
  \Theta^p_I \ := \ 
  \bar\partial \left( h^{ac}_I \cdot \partial h^I_{bc} \right) \, v^I_a \ot v^b_I\,.
\end{equation}

\begin{remark} \label{R:dfnhI}
In the definitions above, we might just as well replace $N$ with any other $N' = \sum_1^k y^i N_i$ in the nilpotent cone, $y^i > 0$.  Then $W(N) = W(N')$, and the two nilpotent operators are related by $N' = \tAd_g N$ with $g \in \tAut(V_\bR,Q)$ preserving the Deligne splitting of $V_\bC$ determined by the mixed Hodge structure $(W,\tilde F_0)$; that is, $g$ is an element of the group $G_\bR^{\mathrm{DS}_w}$ of \eqref{E:eDS}, \cite[Corollary 4.9]{MR3505643}.  So, while $N$ and $N'$ determine different Hermitian metrics $h_I$ and curvatures $\Theta^p_I$, it follows from the invariance of the Weil homomorphism, that $P(\Theta^p_I)$ is independent of this choice.  (In particular, they determine the same Chern form $c_r(\tGr^p\cF_I)$.)  Of course, this is also a posteriori a consequence of \eqref{E:cw}. 
\end{remark}

\subsection{Proof of \eqref{E:cw}} \label{prf:cw}

First we show that the limit on the left-hand side of \eqref{E:cw} exists (Lemma \ref{L:lim}), and then we show that equality holds.  The necessary asymptotics are reviewed in \S\ref{S:CKSasy}.

\subsubsection{Step 1: the limit exists}\label{S:lim1}

From \eqref{E:eDS} we see that there exists an invertible matrix $A(u) = (A^b_a(u))$, depending real-analytically on $u$ so that $A^b_a(u) = 0$ if $q(a)\not=q(b)$, and $\sum_b A^b_a(u) \sfv_b$ is an eigenvector of $\bY^\a(u,0)$ with eigenvalue $e_{\a a}$, for all $\a$.  Then, as noted in \eqref{E:Ye}, $\e(s,u;0)$ acts on $A^b_a(u)\sfv_b$ by the eigenvalue
\[
  e_a(s) \ := \ \prod_\a s_\a^{e_{\a a}/2} \,.
\]  
Let $A(u)^{-1} = (B^b_a(u))$ denote the inverse matrix.  Then 
\begin{subequations}\label{SE:ev}
\begin{equation}
  \e(s,u;0) \, \sfv_a \ = \ \sum_{b,c}  B^b_a(u) e_b(s) A_b^c(u) \,\sfv_c \,.
\end{equation}
Notice that $E(s,u) = (E^c_a(s,u))$,
\begin{equation}
  E^c_a(s,u) \ := \ \sum_{b}  B^b_a(u) e_b(s) A_b^c(u)\,,
\end{equation}
\end{subequations}
is the matrix representing $\e(s,u;0)$ with respect to the basis $\{\sfv_a\}$, and we have 
\begin{equation}\label{E:detE}
  \tdet\, E(s,u) \ = \ \prod_c e_c(s) \,.
\end{equation}
Set
\begin{equation}\label{E:tva}
  \tilde \sfv_a(w) \ := \ \psi(w)^{-1}\,\sfv_a \,.
\end{equation}
Then \eqref{E:sl2w} implies $\e(s,u;w)$ acts on $A^b_a(u) \tilde\sfv_b(w)$ by the eigenvalue $e_a(s)$, and 
\[
  \e(s,u;w) \, \tilde\sfv_a(w) \ = \ 
  \sum_c  E^c_a(s,u) \tilde\sfv_c(w)\,.
\]

Define $h^1(u,w) = (h^1_{ab}(u,w))$ by
\[
  h^1_{ab}(u,w) \ = \ 
  Q \left( \exp( 2\bi\,\bN(u) ) \zeta_u(w)\,\sfv_a \,,\, 
  \overline{\zeta_u(w) \,\sfv_b} \right) \,.
\]
Let $\varrho[h^1]$ be defined as in \eqref{SE:rhoh}, substituting $h$ with $h^1$.  Note that \eqref{E:rhomod} yields
\begin{equation}\label{E:Prho}
  \varrho \ = \ P(\varrho[h]) \,.
\end{equation}
With the notation of Remark \ref{R:unifconvg}, we have

\begin{lemma} \label{L:lim}
Suppose that $t \in (\Delta^{\ast k})^K_c$.  Then 
\[
  P\big(\varrho[h](t,w) \big) \ \cksto \ 
  P\big(\varrho[h^1](u,w) \big) \,.
\]
\end{lemma}

\begin{proof}
We have
\begin{eqnarray*}
  v_a(t,w) & = & \eta(t) \zeta(t,w)\,\sfv_a \\
  & = & \ \psi(w) \exp(\textstyle \sum x_j N_j ) 
  \e(s,u;w)^{-1} \lambda(t,w) \e(s,u;w) \, \tilde\sfv_a(w) \\
  & = & \sum_c E^c_a(s,u)\,\psi(w) \exp(\textstyle \sum x_j N_j ) 
  \e(s,u;w)^{-1} \lambda(t,w) \, \tilde\sfv_c(w) \,.
\end{eqnarray*}
Define $\tilde v_c(t,w) := \psi(w) \exp(\textstyle \sum x_j N_j ) 
  \e(s,u;w)^{-1} \lambda(t,w) \, \tilde\sfv_c(w)$,
so that $v_a(t,w) =  E^c_a(s,u)\,\tilde v_c(t,w)$.  Then
\begin{equation} \label{E:hth}
  \renewcommand{\arraystretch}{1.3}
  \begin{array}{rcl}
  h_{ab}(t,w) & = & (\bi)^n \, E^c_a(s,u) \overline{E^d_b(s,u)}
  \, Q \left( \tilde v_c(t,w) \,,\, \overline{\tilde v_d(t,w)} \right)\\
  & = & (\bi)^n \, E^c_a(s,u) \overline{E^d_b(s,u)} \, \tilde h_{cd}(t,w) \,,
\end{array}
\end{equation}
where
\[
  \tilde h_{cd}(t,w) \ := \ 
  Q \left( \tilde v_c(t,w) \,,\, \overline{\tilde v_d(t,w)} \right) \ = \ 
  Q \left( \lambda(t,w)\,\tilde\sfv_c(w) \,,\, 
  \overline{\lambda(t,w)\,\tilde\sfv_d(w)} \right) \,.
\]
Then \eqref{SE:P} yields 
\begin{equation}\label{E:O=tO}
  P(\varrho[h]) \ = \ P(\varrho[\tilde h]) \,.
\end{equation}
From \eqref{E:tva} and Lemma \ref{L:cks2} we see that 
\begin{equation}\label{E:thh1}
  \tilde h(t,w) \ \cksto \ h^1(u,w) \,.  
\end{equation}
Therefore $P(\varrho[\tilde h]) \cksto P(\varrho[h^1])$.  The lemma now follows from \eqref{E:O=tO}. 
\end{proof}

\subsubsection{Interlude: the Chern form of the Hodge line bundle} \label{S:int}

We pause in the proof of \eqref{E:cw} to recover a result of Koll\'ar's (Proposition \ref{P:kollar}).  Consider the case that $p = n$, so that $\tGr^p\cF = \cF^n$ is the Hodge vector bundle.  Then $\tdet\,h(t,w)$ is the metric on $\tdet\,\cF^n \to B$.  The asymptotic relationship of the Chern form
\[
  c_1(\tdet\,\cF^n) \ = \ 
  \frac{\bi}{2\pi}\, \overline\partial\partial \log\tdet\,h(t,w)
\]
to the Poincar\'e metric is given by \eqref{E:PMrel}.   From \eqref{E:detE}, \eqref{E:hth} and \eqref{E:thh1} we see that 
\[
  \tdet\,h(t,w) \ = \ \bi^m \,\tdet\,\tilde h(t,w)\,(\tdet\,E(s,u))^2
  \ = \ \,\tdet\,\tilde h(t,w) \prod_a e_a(s)^2 \,,
\]
with $m = n\,\mathrm{rank}\,F^n$, and 
\[
  \tdet\,\tilde h(t,w) \ \cksto \ \tdet\,h^1(u,w) \,.
\]
So (dropping the $\bi^m$)
\begin{equation} \label{E:PMrel0}
\renewcommand{\arraystretch}{1.5}
\begin{array}{rcl}
  \log\tdet\,h(t,w) & = & \displaystyle \log \tdet\,\tilde h(t,w)
  \ + \ \sum_{\a,a} e_{\a a} \log s_\a  \\
  & = & \displaystyle \log \tdet\,\tilde h(t,w) \ + \ 
  \sum_{\a,a} e_{\a a} \log\left(\frac{\log |t_{k_\a}|}{\log | t_{k_{\a+1}}|}\right)\,,
\end{array}
\end{equation}
and 
\begin{equation}\label{E:PMrel1}
  \log \tdet\,\tilde h(t,w) \ \cksto \ \log\tdet\,h^1(u,w)\,.
\end{equation}
Differentiating yields
\begin{eqnarray}
  \nonumber
  \partial_{t_{k_\b}} \log \tdet\,h(t,w) & \cksto & 
  \sum_a \left( e_{\b,a} - e_{\b-1,a} \right) 
  \frac{\td t_{k_\b}}{t_{k_\b} \log |t_{k_\b}|^2} \\
  \label{E:PMrel}
  \partial_{\overline t_{k_\b}}\partial_{t_{k_\b}} \log \tdet\,h(t,w) & \cksto & 
  \sum_a \left( e_{\b,a} - e_{\b-1,a} \right) 
  \frac{\td t_{k_\b} \wedge \td \overline t_{k_\b}}{|t_{k_\b}|^2 \left(\log |t_{k_\b}|^2\right)^2} \,.
\end{eqnarray}

\begin{remark}\label{R:poscoef}
We claim that the coefficients $e_{\b,a} - e_{\b-1,a}$ are all \emph{nonnegative integers}.  The way to see this is to recall that $e_{\b,a}$ is an eigenvector of $\bY^\b$, cf.~\S\S\ref{S:ckssl2}--\ref{S:esp}.  So $e_{\b,a} - e_{\b-1,a}$ is an eigenvector of $\bY^\b - \bY^{\b-1} = \hat\sY_\b$.  The $\sfv_a \in H^{n-q,0}_I(-q)$ are all highest weight vectors for the $\tSL(2)$'s; that is, $\hat\sN_\a^+(u,0) \sfv_a = 0$.  The claim follows from standard $\tSL(2)$--representation theory.
\end{remark}

\begin{proposition}[Koll\'ar {\cite{Ko2}}] \label{P:kollar}
The integral $0 \le \displaystyle \int_B c_1(\tdet\,\cF^n)^{d} < \infty$, $d = \tdim\,B$.
\end{proposition}

\begin{proof}
It suffices to prove
\begin{equation}\label{E:w1}
  \displaystyle \int_\sU c_1(\tdet\,\cF^n)^{k+\ell} < \infty\,.
\end{equation}
Fix $c>1$.  As noted in Lemma \ref{L:cks1}, a neighborhood of $0$ in $\sU$ is covered by (a \emph{finite} number of) sets of the form 
\[
  \sU^K_c \ :=  \ \left\{ (t,w) \in (\Delta^*)^K_c \times \Delta^\ell \ | \ 
  1/c \le u^j_\a \le c \,,\ s_\a > c\right\} \,.
\]
Consequently, we see that to prove \eqref{E:w1} it suffices to show that
\begin{equation}\label{E:w2}
  \int_{\sU^K_c} c_1(\tdet\,\cF^n)^{k+\ell} \ < \ \infty \,.
\end{equation}
From \eqref{E:PMrel0} we see that $c_1(\tdet\,\cF^n) = \frac{\bi}{2\pi}(\eta + \tau)$,  where $\eta = \overline\partial \partial\log \tdet\,\tilde h(t,w)$ and 
\[
  \tau \ := \ 
  \overline\partial \partial 
  \sum_{\a,a} e_{\a a} 
  \log\left(\frac{\log |t_{k_\a}|}{\log | t_{k_{\a+1}}|}\right) 
  \ = \ \sum_a \left( e_{\b,a} - e_{\b-1,a} \right) 
  \frac{\td t_{k_\b} \wedge \td \overline t_{k_\b}}
  {|t_{k_\b}|^2 \left(\log |t_{k_\b}|^2\right)^2} \,.
\]
So to prove \eqref{E:w2}, it suffices to show that 
\begin{equation}\label{E:w3}
  \int_{\sU^K_c} \eta^a \wedge \tau^b \ < \ \infty \,,
\end{equation}
for every $0 \le a,b \in \bZ$ such that $a+b = k+\ell$.  We will prove \eqref{E:w3} by induction on $|K| = \rho$.  

Set $\eta_1 := \overline\partial \partial\log \tdet\,h^1(u,w)$.  Then $\int_{\sU^K_c} \eta_1^a \wedge \tau^b < \infty$.  It follows from \eqref{E:PMrel1} that there exists $c'$ (depending on $c > 1$) so that 
\begin{equation}\label{E:w5}
  \int_{\sV^K_{c,c'}} \eta^a \wedge \tau^b \ < \ \infty \,,
\end{equation}
where 
\[
  \sV^K_{c,c'} \ = \ \left\{ (t,w) \in \sU^K_c \ | \ s_\a > c' \,,\ \forall \ \a \right\} \,.
\]
Notice that 
\[
  \sU^K_c \backslash \ \sV^K_{c,c'} \ = \
  \left\{(t,w) \in \sU^K_c \ | \ \exists \ \a \ \hbox{s.t.}\ c < s_\a \le c' \right\} \,.
\]
In particular, 
\begin{equation}\label{E:w6}
  \sU^K_c \backslash \ \sV^K_{c,c'} \ \subset \ \bigcup_{K' \subsetneq K} \sU^{K'}_{c'} \,.
\end{equation}

If $|K|=1$, then $s = (s_1,\ldots,s_\rho) = (s_1)$.  So that 
\[
  \sU^K_c \backslash \ \sV^K_{c,c'} \ = \
  \left\{(t,w) \in \sU^K_c \ | \ c < s_1 \le c' \right\}
\]
has compact closure in $\sU$.  The desired \eqref{E:w3} then follows from \eqref{E:w5}.

For $|K| > 1$, the desired \eqref{E:w3} now follows from \eqref{E:w6} by induction.
\end{proof}

\begin{remark} \label{R:kollar}
Proposition \ref{P:kollar} also holds when $c_1(\tdet\,\cF^n)$ is replaced with the Chern form $c_1(\Lambda)$ of the augmented Hodge line bundle,
\[
   0 \ \le \ 
   \int_B c_1(\Lambda)^{d} 
   \ < \ \infty \,,
\]
essentially by the same argument.  There is one subtlety here regarding Remark \ref{R:poscoef}: the eigenvalues $e_{\b,a} - e_{\b-1,a}$ appearing in this case need not be non-negative.  However, any negative eigenvalues are dominated by positive eigenvalues.   More precisely, if we define $f_p \in \mathbb{N}$ by expressing the augmented Hodge line bundle \eqref{E:ahlb} as 
\[
  \Lambda \ = \ \bigotimes_p (\cF^p/\cF^{p+1})^{f_p} \,,
\]
then the right-hand side of \eqref{E:PMrel} becomes
\[
  \sum_{p=n}^{\lceil (n+1)/2 \rceil}
  \sum_{\sfv_a \in I^{p,\sb}_0}
  f_p \left( e_{\b,a} - e_{\b-1,a} \right) 
  \frac{\td t_{k_\b} \wedge \td \overline t_{k_\b}}{|t_{k_\b}|^2 \left(\log |t_{k_\b}|^2\right)^2} \,.
\]
Standard $\tSL(2)$--theory ensures that the sum
\[
  \sum_{p=n}^{\lceil (n+1)/2 \rceil}
  \sum_{\sfv_a \in I^{p,\sb}_0}
  f_p \left( e_{\b,a} - e_{\b-1,a} \right) 
\]
is a non-negative integer.
\end{remark}

\subsubsection{Step 2: equality} \label{S:lim2}

Returning to \eqref{E:Prho} and Lemma \ref{L:lim}, in order to prove \eqref{E:cw} it remains to show that 
\begin{equation}\label{E:cw2}
  P\big( \varrho[h^1](u,w) \big) \ = \ P \big( \Theta[h_I](w) \big) \,.
\end{equation}

\begin{proof}[Proof of \eqref{E:cw2}]

First recall that $\sum y_j N_j$ commutes with $\xi(0,w)$.  Consequently, $\tAd_{\e(s,u;w)} \sum y_j N_j$ commutes with $\mu(s,u;w)$.  It follows from Lemma \ref{L:mu} and \eqref{E:nulim} that $\bN(u,w)$ and $\psi(w)^{-1} \zeta_u(w)\psi(w)$ commute.  Then \eqref{E:bN} implies $\bN(u) = \bN(u,0)$ and $\zeta_u(w)$ commute.  Finally, we note that \eqref{E:Yw} implies that $\zeta_u(w)$ preserves the $\tilde I_{0,\ell} = \op_{p+q=\ell} \tilde I^{p,q}_0$, for all $\ell$.  These observations, along with the fact that $\bN(u)$ polarizes the MHS $(W,\tilde F_0)$, implies
\begin{equation} \nonumber
  h^1_{ab}(u,w) \ := \ 
  \left\{
  \begin{array}{ll}
  (2\bi)^q\,Q\left(\zeta_u(w) \bN(u)^q \,\sfv_a \,,\, 
  \overline{\zeta_u(w) \,\sfv_b} \right) \,,\ & q = q(a) = q(b) \,;\\
  0 \,, & \hbox{otherwise.}
  \end{array}
  \right.
\end{equation}
The observation \eqref{SE:P} implies that the Hermitian matrix $h^2(u,w) = (h^2_{ab}(u,w))$ given by 
\begin{equation} \nonumber
  h^2_{ab}(u,w) \ := \ 
  \left\{
  \begin{array}{ll}
  Q\left(\zeta_u(w) \bN(u)^q \,\sfv_a \,,\, 
  \overline{\zeta_u(w) \,\sfv_b} \right) \,,\ & q = q(a) = q(b) \\
  0 \,, & q(a) \not= q(b)\,,
  \end{array}
  \right.
\end{equation}
satisfies 
\begin{equation}\label{E:h1=h2}
  P \big( \varrho[h^1](u,w) \big) \ = \ P \big( \varrho[h^2](u,w) \big) \,.
\end{equation}

Each of the cones 
\[
  \s \ := \ \tspan_{\bR_{>0}}\{ N_1 , \ldots , N_k \} \tand
  \hat\s_u \ := \ \tspan_{\bR_{>0}}\{ \hat\sN_1(u,0) , \ldots , 
  \hat\sN_\rho(u,0) \}
\]
is contained in an $\tAd(G_\bR^{\mathrm{DS}_0})$--orbit (cf.~\eqref{E:eDS}), \cite[Corollary 4.9]{MR3505643}.  Additionally, \cite[(4.20.vi)]{CKS1} implies they lie in the same orbit.  In particular, there exists $g(u) \in G_\bR^{\mathrm{DS}_0}$  so that $\bN(u) = \tAd_{g(u)} N$.  Therefore $\bN(u)^q \sfv_a = g(u) N^q g(u)^{-1} \sfv_a$.  Since $g(u)$ preserves both $\tilde I^{n,q}$ and $N^q(\tilde I^{n,q}) = \tilde I^{n-q,0}$, there exist functions $g(u)^a_b$, $q = q(a) = q(b)$, so that 
\begin{equation}\label{E:NvbN}
  \bN(u)^q \sfv_a \ = \ g(u) N^q g(u)^{-1} \sfv_a
  \ = \ g(u)^b_a \, N^q \sfv_b \,.
\end{equation}  
So we have
\begin{equation}\nonumber
  h^2_{ab}(u,w) \ := \ 
  \left\{
  \begin{array}{ll}
  g(u)^c_a\, h^3_{cb}(u,w) \,,\ & 
  q = q(a) = q(b) \,,\\
  0 \,, & q(a) \not= q(b)\,,
  \end{array}
  \right.
\end{equation}
where
\begin{equation}\label{E:h3}
  h^3_{cd}(u,w) \ := \ 
  \left\{
  \begin{array}{ll}
  Q\left(\zeta_u(w) N^q\,\sfv_c \,,\, 
  \overline{\zeta_u(w) \,\sfv_d} \right) \,,\ & q = q(a) = q(b) \,,\\
  0 \,, & q(a) \not= q(b) \,.
  \end{array}
  \right.
\end{equation}
So \eqref{SE:P} yields
\begin{equation}\label{E:h2=h3}
  P \big( \varrho[h^2](u,w) \big) \ = \ P \big( \varrho[h^3](u,w) \big) \,.
\end{equation}

Recall that $\zeta_I(w) = \exp(X_I(w))$, with $X_I(w)$ taking value in
\[
  \fz \ \subset \ \mathfrak{w} \ = \ 
  \bigoplus_{e_1,\ldots,e_\rho\ge0} \fg_{-e_1,\ldots,-e_\rho}(u,0) \,.
\]
(cf.~\eqref{E:zetaI}, \S\ref{S:horiz}, \eqref{E:w} and \eqref{E:zinw}). In fact, \eqref{E:Yw} implies that $X_I(w)$ is the component of $X(0,w) \in \fz$ taking value in 
\[
  \fz \ \cap \ \tker\,\bY^\rho_0 \ = \ 
  \fz \ \cap \ 
  \bigoplus_{e_1,\ldots,e_{\rho-1}\ge0} \fg_{-e_1,\ldots,-e_{\rho-1},0}(u,0)\,,
\]
and that this intersection is independent of $u$.  Consequently, $\zeta_u(w) = \exp X_u(w)$, with $X_u(w)$ the component of $X_I(w)$ taking value in $\fg_{0,\ldots,0}(u,0)$.  It follows from Lemma \ref{L:key1} that 
\begin{equation}\label{E:hpp4}
  \e(s,u;0) \zeta_I(w) \e(s,u;0)^{-1} \ = \ \exp\tAd_{\e(s,u;0)} X_I(w)
  \ \cksto \ \exp X_u(w) \ = \ \zeta_u(w) \,.
\end{equation}
Since $Q$ is $G$--invariant, and $\e(s,u;0)$ is $G_\bR$--valued, we have 
\begin{equation}\label{E:hpp5}
\renewcommand{\arraystretch}{1.3}
\begin{array}{r}
 Q\left(\e(s,u;0)\zeta_I(w) \e(s,u;0)^{-1} N^q\,\sfv_a \,,\, 
  \overline{\e(s,u;0)\zeta_I(w) \e(s,u;0)^{-1}\,\sfv_b} \right)
 \hsp{50pt} \\ = \ 
 Q\left(\zeta_I(w) \e(s,u;0)^{-1} N^q\,\sfv_a \,,\, 
  \overline{\zeta_I(w) \e(s,u;0)^{-1}\,\sfv_b} \right) \,.
\end{array}
\end{equation}
Setting
\[
  h'_{ab}(s,u;w) \ := \ \left\{ \begin{array}{ll}
  Q\left(\zeta_I(w) \e(s,u;0)^{-1} N^q\,\sfv_a \,,\, 
  \overline{\zeta_I(w) \e(s,u;0)^{-1}\,\sfv_b} \right) \,,\ & 
  q = q(a) = q(b) \\
  0 & q(a) \not= q(b)\,,
  \end{array}\right.
\]
\eqref{SE:P}, \eqref{E:h3}, \eqref{E:hpp4} and \eqref{E:hpp5} yield 
\begin{equation}
  \label{E:hpp1}
   P \big(\varrho[h'](s,u;w) \big) \ \cksto \ 
   P \big( \varrho[h^3](u,w) \big) \,.
\end{equation}

On the other hand, by \eqref{SE:ev}, we have $\e(s,u;0)^{-1}\,\sfv_a = E^{-1}(s,u)_a^b\,\sfv_b$.  An analogous argument yields $\e(s,u;0)^{-1}\, N^q \sfv_a = D^{-1}(s,u)^b_a \, N^q \sfv_b$, for some invertible matrix $D(s,u)$.  On the other hand, as noted after \eqref{E:ees}, $\e(s,u;0)$ preserves both $\tilde I^{n,q}_0 = \tspan\{ \sfv_a\}_{q = q(a)}$ and $\tilde I^{n-q,0}_0 = \tspan\{ N^q \sfv_a\}_{q = q(a)}$.  In particular, there exist invertible matrices $A(s,u)^c_a$ and $B(s,u)^d_b$ so that $\e(s,u;0)^{-1} N^q\,\sfv_c = A(s,u)^c_a\, N^q\,\sfv_a$ and $\e(s,u;0)^{-1}\,\sfv_b = B(s,u)^d_b \,\sfv_d$.  Consequently, 
\begin{equation}\label{E:hpp2}
\renewcommand{\arraystretch}{1.7}
\begin{array}{rcl}
  h'_{ab}(s,u;w)
  & = & 
  \displaystyle\sum_{q = q(c)=q(d)}
  D^{-1}(s,u)^c_a \,Q\left(\zeta_I(w)  N^q\,\sfv_c \,,\, 
  \overline{\zeta_I(w)\,\sfv_d} \right) \, \overline{E^{-1}(s,u)^d_b} \\
  & = & 
  \displaystyle\sum_{q = q(c)=q(d)}
  D^{-1}(s,u)^c_a \,h''_{cd}(w) \, \overline{E^{-1}(s,u)^d_b} \,,
\end{array}
\end{equation}
where the Hermitian metric $h''(w) = (h''_{ab}(w))$ is defined by 
\[
  h''_{ab}(w) \ := \ 
  \left\{\begin{array}{ll}
    Q\left( 
    N^q\zeta_I(w) \,\sfv_a \,,\, 
    \overline{\zeta_I(w) \,\sfv_b} \right) \,,\quad & 
  \hbox{if } q = q(a) = q(b) \,,\\
  0 \quad & q(a) \not= q(b)\,.
  \end{array}\right.
\]
So \eqref{SE:P} and \eqref{SE:hFnI} yield
\begin{equation}\label{E:hpp3}
  P(\varrho[h']) \ = \ P(\varrho[h_I]) \,.
\end{equation}
The desired \eqref{E:cw2} now follows from \eqref{E:h1=h2}, \eqref{E:h2=h3}, \eqref{E:hpp1} and \eqref{E:hpp3}.
\end{proof}

\appendix
\section{Review of asymptotics at infinity} \label{S:rvw}

The proof of Theorem \ref{th:c} which is given in Section \ref{S:IV}  is based on the properties of the asymptotic behavior of $\Phi$ at infinity established by \cite{Sc, CKS1}. This section is a brief review of the necessary material.

\begin{remark}
It is important to note that  while the results reviewed here were formulated originally in the case  that $D$ is a period domain, they in fact hold in  the more general setting that $D$ is Mumford--Tate domain,  so that there is no obstruction to our applying them here. We refer the reader to  \cite[\S4]{KPR} and the references within.  
\end{remark}

\subsection{Local VHS}  \label{S:locVHS}

Recollect that $D$ parameterizes weight $n$, $Q$--polarized Hodge structures on a rational vector space $V$.  

Let 
\[
  \Delta \ := \ \{ \z \in \bC \ : \ |\z| < 1 \}
\]
denote the unit disc, and 
\[
  \Delta^* \ := \ \{ \z \in \bC \ : \ 0 < |\z| < 1 \}
\] 
the punctured unit disc.  Fix a point $b_0 \in Z_I^*$.  Let $\overline \sU \ \simeq \Delta^r = \Delta^k \times \Delta^\ell \ni (t,w)$ be a local coordinate neighborhood centered at $b_0$ in $\overline B$ so that $Z \cap \olU = \{ t_1\cdots t_k = 0 \}$; in particular, 
\[
  \sU \ := \ \overline\sU \cap B \simeq (\Delta^*)^k \times \Delta^\ell \,,
\]
with $r=k+\ell$ and $k = |I|$.  

Let $T_i \in \tAut(V,Q)$ be the local monodromy operator induced by the generator of $\pi_1(\sU)$ coming from the $i$--th $\Delta^*$.  After base change we may (and do) assume the $T_i$ are unipotent, and let $N_i := \log(T_i) \in \tEnd(V,Q)$ denote the nilpotent logarithms. 

Let $\sH \subset \bC$ denote the upper-half plane, and let 
\[
  \tPhi_\sU : \sH^k \times \Delta^\ell \ \to \ D
\]
be a lift of $\left.\Phi\right|_\sU$.  Fix coordinates $(z,w) \in \sH^k \times \Delta^\ell$.  Then $(z,w) \mapsto (\exp(2\pi\sqrt{-1} z) , w)$ defines the covering map $\sH^k \times \Delta^\ell \to (\Delta^*)^k \times \Delta^\ell$.  Here we are writing $\exp(2\pi\sqrt{-1}\,z)$ as short-hand for the $(\Delta^*)^k$--valued $(\exp(2\pi\sqrt{-1}\,z_1) , \ldots , \exp(2\pi\sqrt{-1}\,z_k))$.  Let $\check D \supset D$ denote the compact dual of the Mumford--Tate domain.  Schmid \cite{\Schmid} showed that there exists a holomorphic map 
\[
  F : \olU \ \to \ \check D
\]
so that the lifted period map factors as
\begin{equation}\label{E:tPhiU}
  \widetilde\Phi_\sU(z,w) \ = \ 
  \exp\left( \textstyle \sum_{i \in I} z_i N_i \right) 
  \cdot  F(\exp (2\pi\bi z),w) \,.
\end{equation}

Let
\[
  \ell(t_j) \ := \ \frac{\log t_j}{2\pi \bi} \,.
\]
Observe that 
\begin{subequations}\label{SE:locVHS}
\begin{equation}
  \Phi_\sU(t,w) \ := 
  \exp\left( \textstyle \sum_{i \in I} \ell(t_i) N_i \right) 
  \cdot F(t,w)
\end{equation}
defines a \emph{local variation of Hodge structure}
\begin{equation}
  \Phi_\sU : \sU = (\Delta^*)^k \times \Delta^\ell \ \to \ 
  \Gamma_\sU\backslash D \,,
\end{equation}
\end{subequations}
where $\Gamma_\sU \subset \Gamma$ is the \emph{local monodromy group} generated by the unipotent monodromy operators $\{T_i\}_{i\in I}$.  Notice that \eqref{SE:locVHS} recovers $\left.\Phi\right|_\sU$ after quotienting $\Gamma_\sU\backslash D \to \GsD$ by the full monodromy group $\Gamma$.   

\subsection{The period map $\Phi_I^0 : Z_I^* \to \Gamma_I\bsl D_I^0$} \label{S:PhiI}

Define 
\[
  \s_I \ = \ \tspan_{\bR_{>0}}\{ N_i \ | \ i \in I \} \,.
\]
and let 
\[
  W_0(\s_I) \ \subset \ W_1(\s_I) \ \subset \cdots \subset \ W_{2n}(\s_I) 
\]
be the associated (shifted\footnote{\label{fn:W}Typically, ``$W(N)$'' denotes a representation-theoretic filtration with indexing that is centered at $0$.  In this paper, we are letting ``$W(N)$'' denote the shifted geometric filtration $W(N)[-n]$, with indexing that centered at $n$.  The reasons for this mild abuse of notation is that (i) this is the only filtration we will work with and (ii) the abuse significantly reduces notational clutter.}) weight filtration.  The filtration $F(0,w)$ induces a Hodge filtration on the ``weight graded'' quotients
\[
  \tGr^{W(\s_I)}_a \ := \ W_a(\s_I)/W_{a-1}(\s_I) \,.
\]
In general, the filtration $F(0,w) \in \check D$ depends on the choice of local coordinates.  (The well-defined object is the nilpotent orbit $\exp(\bC\s_I)\cdot F(0,w)$.)  However, the Hodge filtration on $\tGr^{W(\s_I)}_a$ is independent of this choice.  So we have a well-defined map on $Z_I^* \cap \olU$ sending $(0,w)$ to the Hodge filtrations $F^\tinyb_{(0,w)}(\tGr^{W(\s_I)}_\tinyb)$.  This map takes value in 
\[
  D_I \ = \ \{ F^\tinyb(\tGr^W(\s_I)_\tinyb) \ | \ (W,F,\s_I) \hbox{ is a 
  polarized MHS}\} \,.
\]
The fact that the triples $(W,F,\s_I)$ are \emph{polarized} mixed Hodge structures implies that $D_I$ may be realized as a Mumford--Tate subdomain of a product of period domains.  The local map $Z_I^* \cap \olU \to D_I$ induces the period map
\begin{equation}\label{E:PhiIdfn}
  \Phi^0_I : Z_I^* \ \to \ \Gamma_I\backslash D_I \,.
\end{equation}

\subsection{Deligne's $\bR$--split PMHS} \label{S:Dsplit}

In general, the LMHS $(W,F(0,w))$ will not be $\bR$--split.  It will be convenient to work with Deligne's associated $\bR$--split MHS $(W(N),\tilde F_w)$, cf.~\cite[(2.20)]{CKS1}; here $\tilde F_w = \exp(-\bi\,\delta_w) \cdot F(0,w)$.  The element $\delta_w \in \fg_\bR$ commutes with the $N_i$, and $w \mapsto \d_w$ is a real analytic map $\Delta^\ell \to \fg_\bR$.  Let 
\begin{equation}\label{E:DSw}
  V_\bC \ = \ \bigoplus\tilde I^{p,q}_w \tand
  \fg_\bC \ = \ \bigoplus\tilde\fg^{p,q}_w
\end{equation}
be the associated Deligne splittings.  Set
\[
 \tilde \fn_w \ := \ \bigoplus_{\mystack{p<0}{q}} \tilde\fg^{p,q}_w 
 \ = \ \tilde\fg^{-,\sb}_w \,.
\]
Note that 
\begin{equation}\label{E:gwdecomp}
  \fg_\bC \ = \ \tilde \fn_w \,\op\, \tilde \fp_w\,,
\end{equation}
where $\tilde\fp_w$ is the Lie algebra of $\tStab_{G_\bC}(\tilde F_w)$.   Shrinking the neighborhood $\olU$ if necessary, there exists a unique holomorphic map
\[
  X : \overline\sU \ \to \ \tilde\fn_0
\]
so that $X(0,0) = 0$ and 
\begin{subequations}\label{SE:P5a}
\begin{equation}
  \Phi(t,w) \ = \ 
  \exp\left(\bi \,\d_0 \ + \ \textstyle\sum_{i=1}^k \ell(t_i)N_i \right)
   \zeta(t,w) \cdot \tilde F_0 \,, 
\end{equation}
where
\begin{equation}
  \zeta(t,w) \ := \ \exp X(t,w) \,.
\end{equation}
\end{subequations}
See \cite[\S5]{CKS1} for details.

\subsection{Horizontality}  \label{S:horiz}
Horizontality implies $X(0,w)$ commutes with the $\{ N_i \ | \ i \in I\}$, and therefore takes value in 
\[
  \fz_I \ := \ 
  \bigcap_{i \in I} \,\tker(\tad\,N_i) 
  \ \subset \ \fg_\bC \,.
\]
Without loss of generality, $I = \{1,\ldots,k\}$, and we may write $\fz_I$ as 
\begin{equation}\nonumber
  \fz_\bC \ := \ 
  \{ Z \in \fg_\bC \ | \ [Z,N_j] = 0 \,,\ \forall j \} 
  \ = \ \bigcap_{j=1}^k \tker\,(\tad\,N_j) \,.
\end{equation}
This Lie algebra inherits a decomposition
\[
  \fz_\bC \ = \ \bigoplus_{\ell\le0} \fz_\ell(w) \,,\quad
  \hbox{with}\quad
  \fz_\ell(w) \ := \ \fz_\bC \,\cap\, \op_{p+q=\ell}\,\tilde\fg^{p,q}_w
\]  
from the Deligne splitting.

\subsection{Relationship between the $\bR$--split $\tilde F_w$}\label{S:reltFw}

The centralizer
\[
  \cZ_\bC \ := \ \{ g \in G_\bC \ | \ \tAd_g N_i = N_i \,,\ \forall i \} \,.
\]
is defined over $\mathbb{Q}$, preserves the weight filtration $W(N)$, $N = N_1 + \cdots + N_k$, and has Lie algebra $\fz_\bC$. 

The Hodge filtrations $\tilde F_w$ are all congruent to $\tilde F_0$ under the action of $\cZ_\bR$ \cite{KP16}.  In particular, we may choose a real analytic function $\psi : \Delta^\ell \to \exp(\fz_\bR) \subset \cZ_\bR$  so that $\psi(0)$ is the identity and 
\begin{equation} \label{E:Fw}
  \psi(w) \cdot \tilde F_w \ = \ \tilde F_0 \,.\footnote{This choice of $\psi(w)$ is not unique.  There is a unique choice of $\psi_\bC : \Delta^\ell \to \exp(\fz_\bC \cap \tilde\fn_0) \subset \cZ_\bC$ so that $\psi_\bC(0)$ is the identity and $\psi_\bC(w) \cdot \tilde F_w = \tilde F_0$.  Nonetheless it is better to work with the $\cZ_\bR$--valued $\psi(w)$, because the Hermitian metric $h_{F^n}$ is $G_\bR$--invariant, but not $G_\bC$--invariant.  And ultimately the argument and result are independent of our choice.}
\end{equation}

\subsection{The Cattani--Kaplan--Schmid asymptotics} \label{S:CKSasy}

Here we review the necessary results from \cite[\S5]{CKS1}.\footnote{The arguments of \cite[\S5]{CKS1} assume that $\ell = 0$, so that the holomorphic parameter $w$ does not play a role.  However, the proofs there (up to and including that of \cite[(5.14)]{CKS1}) all apply, in a straightforward manner, in our more general setting to yield the assertions below.}  

\subsubsection{The CKS coordinates} \label{S:CKScoords}

Fix $K = \{ k_1 , \ldots , k_\rho \} \subset \{1,\ldots , k\}$ so that $1 \le k_1 < \cdots < k_\rho = k$.  Define
\begin{eqnarray*}
  s_\a & := & y_{k_\a}/y_{k_{\a+1}} \,,\quad \hbox{for } \a < \rho \,, 
  \tand s_\rho \ := \ y_k \,; \\
  u_\a^j & := & y_j/ y_{k_\a} \,,\quad \hbox{for } \ k_{\a-1} < j < k_\a \,.
\end{eqnarray*}
Define
\begin{eqnarray*}
  \bR_+^k & := & \{ y = (y_j) \in \bR^k \ | \ y_j > 0 \} \\
  \bR_+^\rho & := & \{ s = (s_\a) \in \bR^\rho \ | \ s_\a > 0 \} \\
  \bR_+^{k-\rho} & := & \{ u = (u^j_\a) \in \bR^{k-\rho} \ | \ u^j_\a > 0 \} \,.
\end{eqnarray*}
Let
\begin{eqnarray*}
  \sA & := & \hbox{(real) analytic functions of } (u,w) \in 
  \bR^{k-\rho}_{+} \times \Delta^\ell\,,\\
  \sL & := & \hbox{Laurent polys.~in } \{s_\a^{1/2}\} \hbox{ with coef.~in } \sA \,,\\
  \sO & := & \hbox{pullback to $\sH^k \times \Delta^\ell$ of the 
  ring of holo.~germs at } 
  0 \in \Delta^r = \Delta^k \times \Delta^\ell \\
  & & \hbox{via } \sH^k \to (\Delta^*)^k \inj \Delta^k \,,\\
  \sL^\flat & := & \hbox{polys.~in } s_\a^{-1/2} \hbox{ with coef.~in } \sA \,,\\
  (\sO \ot \sL)^\flat & := & \hbox{subring of $\sO \otimes \sL$ gen.~by 
  $\sO$, $\sL^\flat$, and all monomials of the form}\\
  & & t_j s^{m_1/2}_1 \cdots s^{m_\rho/2}_\rho 
  \hbox{ with $m_\a \in \bZ$ and $m_\a \not=0$ only if $j \le i_\a$.}
\end{eqnarray*}
We identify $\bR^k_{+}$ with $\bR^{\rho}_{+} \times \bR^{k-\rho}_{+}$ by $y \mapsto (s,u)$.  Recall that $\sH \subset \bC$ denotes the upper-half plane.  Given $c > 0$ define
\begin{eqnarray*}
  (\bR^k_+)^K_c & := & \left\{ y \in \bR^k_+ \ | \ s_\a > c \,,\ 
  1/c \le u^j_\a \le c \right\}  \\
  (\sH^k_+)^K_c & := & \left\{ z \in \sH^k \ | \ z = x + \bi y \,,\ 
  y \in (\bR^k_+)^K_c \right\}\\
  (\Delta^{\ast k})^K_c & := & \left\{ t \in (\Delta^*)^k \ | \ 
  \ell(t_j) \in (\sH^k)^K_c \right\} \,.
\end{eqnarray*}

\begin{lemma}[{\cite[(5.7)]{CKS1}}] \label{L:cks1}
\emph{(a)} For any $c > 1$, the regions $(\Delta^{\ast k})^K_c$ corresponding to the various permutations of the variables and choices of $K \subset \{1,\ldots,k\}$ cover the intersection of $(\Delta^*)^k$ with a neighborhood of $0 \in \Delta^k$.
\emph{(b)} The set $(\sO \ot \sL)^\flat$ consists of precisely those elements in $\sO \ot \sL$ that are bounded on $(\sH^k)^K_c$ for some (any) $c$.
\end{lemma}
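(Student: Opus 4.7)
Both parts rest on the explicit factorization $y_j = u^j_\beta\, s_\beta s_{\beta+1} \cdots s_\rho$ for $j$ in the $\beta$-th block (i.e.\ $k_{\beta-1} < j \le k_\beta$), which on $(\sH^k)^K_c$ gives the exponential decay $|t_j| = \exp(-2\pi u^j_\beta\, s_\beta \cdots s_\rho)$. My plan is to prove (a) by a right-to-left greedy decomposition of the sorted $y_j$'s, and then to deduce (b) by a growth-rate analysis of individual monomials.

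For (a), given $c > 1$ and $t \in (\Delta^*)^k$ close enough to $0$ that $y_j > c$ for every $j$, I would first apply a permutation $\sigma$ sorting $y_{\sigma(1)} \ge \cdots \ge y_{\sigma(k)}$. Starting with $y_{\sigma(k)}$, I grow a block to the left by including $y_{\sigma(j)}$ as long as $y_{\sigma(j)}/y_{\sigma(k_\beta)} < c$; when the next ratio would equal or exceed $c$, I close the block and begin a new one. Writing $K = \{k_1 < \cdots < k_\rho = k\}$ for the resulting cut points, the stopping rule gives $s_{\beta-1} = y_{\sigma(k_{\beta-1})}/y_{\sigma(k_\beta)} \ge c$ by construction, while within each block $u^j_\beta = y_{\sigma(j)}/y_{\sigma(k_\beta)} \in [1,c)$. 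On the dense open subset where these inequalities are strict, the resulting $K$ places $t$ in $(\Delta^{*k})^K_c$; boundary cases are absorbed by an arbitrarily small perturbation (or by running the construction with a slightly smaller cutoff), so the finite union over all $(\sigma,K)$ covers a neighborhood of $0$ in $(\Delta^*)^k$.

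For the ``$\subseteq$'' direction of (b), every generator of $(\sO \ot \sL)^\flat$ is bounded on $(\sH^k)^K_c$. Elements of $\sL^\flat$ are polynomials in $s_\alpha^{-1/2}$ with coefficients in $\sA$, hence bounded since $s_\alpha > c > 1$ and $(u,w)$ ranges over a bounded set; pullbacks of holomorphic germs from $\sO$ are bounded near the origin; and for a mixed generator $t_j\, s_1^{m_1/2} \cdots s_\rho^{m_\rho/2}$ with $m_\alpha \ne 0$ only if $j \le k_\alpha$, let $\beta$ be the block containing $j$. The support condition forces $\alpha \ge \beta$ whenever $m_\alpha \ne 0$, so every such $s_\alpha$ already appears as a factor of the exponent $s_\beta \cdots s_\rho$ in $|t_j| = \exp(-2\pi u^j_\beta\, s_\beta \cdots s_\rho)$; hence the exponential decay of $|t_j|$ dominates any polynomial in those $s_\alpha$'s, and the monomial is bounded.

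The converse inclusion is the main technical obstacle. My plan is to expand an arbitrary bounded $f \in \sO \ot \sL$ as a convergent series
\[
  f \ = \ \sum_{\mathbf n, \mathbf m} a_{\mathbf n, \mathbf m}(u,w)\,
  t^{\mathbf n}\, s^{\mathbf m / 2}
\]
and then to argue term-by-term by exploiting the pairwise distinct asymptotic profiles of the $t^{\mathbf n}\, s^{\mathbf m/2}$ in $(s_1,\ldots,s_\rho)$. For fixed $(u,w)$ the exponential factor $\exp\bigl(-2\pi \sum_j n_j\, u^j_{\beta(j)}\, s_{\beta(j)} \cdots s_\rho\bigr)$ is determined (modulo polynomial factors) by $\mathbf n$, and the products $s_\beta s_{\beta+1} \cdots s_\rho$ are algebraically independent as monomials in $(s_1,\ldots,s_\rho)$. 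A Vandermonde-type separation of exponential rates should then extract from uniform boundedness of $f$ a bound on each coefficient times its individual monomial, forcing every non-trivial $a_{\mathbf n, \mathbf m}$ to satisfy the support constraint $m_\alpha \ne 0 \Rightarrow j \le k_\alpha$, which is exactly the defining condition of $(\sO \ot \sL)^\flat$. Making this separation rigorous --- in particular carefully handling the interaction between the Laurent-polynomial freedom in the $s_\alpha^{1/2}$ and the infinitely many multi-indices $\mathbf n$ entering through $\sO$ --- is where the detailed combinatorics of the blocks $\{k_{\beta-1}+1,\ldots,k_\beta\}$ supplied by the choice of $K$ is actually used.
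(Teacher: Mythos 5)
The paper itself offers no proof here --- the lemma is imported verbatim from \cite[(5.7)]{CKS1} --- so your reconstruction must stand on its own. Your part (a) is the standard greedy argument and is essentially right; note only that with the strict inequalities in the definition of $(\bR^k_+)^K_c$ a point such as $(y_1,y_2)=(ca,a)$ with $k=2$ lies in \emph{none} of the four open regions for that particular $c$ (each admissible choice of permutation and $K$ violates one of the defining strict inequalities), so neither perturbing the point nor adjusting the cutoff rescues the literal statement. This is a defect of the strict-inequality formulation rather than of your construction, and it is harmless because the lemma is only ever applied through subsequence extraction. Your forward inclusion in (b) is also correct: $y_j=u^j_\beta\,s_\beta\cdots s_\rho$ together with the equivalence of the support condition $m_\alpha\ne0\Rightarrow j\le k_\alpha$ with $\alpha\ge\beta(j)$ does make each generator bounded.

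The genuine gap is in the converse inclusion of (b), and you have located it without closing it. A Vandermonde-type separation applied directly to $\sum_{\mathbf{n},\mathbf{m}} a_{\mathbf{n},\mathbf{m}}(u,w)\,t^{\mathbf{n}}s^{\mathbf{m}/2}$ cannot work: there are infinitely many $\mathbf{n}$, so no finite separation of growth rates is available, and $|t_j|$ is itself a function of $(s,u)$, so $t$ and $s$ cannot be varied independently to isolate the finitely many $\mathbf{m}$. The missing tool is the one variable your analysis never touches: the real parts $x_j=\tRe\,z_j$, which are completely unconstrained on $(\sH^k)^K_c$. Averaging $f\,e^{-2\pi\bi\langle\mathbf{n},x\rangle}$ over $x\in[0,1]^k$ (equivalently, Cauchy estimates in $t$) bounds each slice $\bigl(\sum_{\mathbf{m}}a_{\mathbf{n},\mathbf{m}}(u,w)\,s^{\mathbf{m}/2}\bigr)e^{-2\pi\langle\mathbf{n},y\rangle}$ by $\sup|f|$. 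For fixed $\mathbf{n}$ one then argues one $s_\alpha$ at a time: if $m_{\alpha_0}>0$ while $n_j=0$ for all $j\le k_{\alpha_0}$, the exponential factor does not involve $s_{\alpha_0}$, so letting $s_{\alpha_0}\to\infty$ with everything else fixed and invoking linear independence of the finitely many characters $s^{\mathbf{m}/2}$ forces $a_{\mathbf{n},\mathbf{m}}=0$. Finally one must reassemble $f$ as a \emph{finite} combination of generators, e.g.\ by grouping the surviving monomials according to $j(\mathbf{n}):=\min\{j: n_j>0\}$ and factoring each group as $(t_j\,s^{\mathbf{m}^+/2})\cdot H_j$ with $H_j\in\sO\ot\sA$. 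This separation-and-reassembly is the actual content of (b), and it is exactly the step your proposal defers.
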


\begin{remark}\label{R:unifconvg}
The coordinates $y = (s,u)$ are well-adapted to study the asymptotic behavior of the Hodge metric and Chern form for the sequence $t_\mu$ (cf.~the outline of the proof of \eqref{E:cw} in \S\ref{S:formulation}).  For the remainder of \S\ref{S:CKSasy}, and throughout \S\ref{prf:cw}, the notation
\[
  f(x,s,u;w) \ \cksto \ g(x,u;w) \quad \hbox{(or } f(t,w) \ \cksto \ g(x,u;w)\hbox{)}
\]
will indicate that \emph{$f(x,s,u;w)$ converges to $g(x,u;w)$ as $s_1 , \ldots , s_\rho \to \infty$, and that this convergence is uniform on compact subsets of $\{ x \in \bR^k\} \times \{ u^j_\a \in \bR^{k-\rho}_+\} \times \{ w \in \Delta^\ell\}$.}
\end{remark}

\subsubsection{Commuting $\tSL(2)$'s} \label{S:ckssl2}

Define
\[
  \sN_\a(u) \ := \ 
  N_{k_\a} \ + \ \sum_{k_{\a-1} < j < k_\a} u^j_\a N_j 
  \ = \ \frac{1}{y_{k_\a}} \sum_{k_{\a-1}<j\le k_\a} y_j N_j\,.
\]
Note that
\begin{equation}\label{E:NvsN}
  \sum_{j=1}^k y_j N_j \ = \ 
  \sum_{\a=1}^\rho (s_\a s_{\a+1} \cdots s_\rho) \sN_\a(u) \,.
\end{equation}
Since each $\exp(\sum z_j N_j) \cdot \tilde F_w$ is a nilpotent orbit, it follows that $\exp( \sum_\a z_{k_\a} \sN_\a(u) ) \cdot \tilde F_w$ is also a nilpotent orbit.  The several-variable $\tSL(2)$--orbit theorem \cite[(4.20)]{CKS1} associates to this nilpotent orbit a collection $\{ \nu_\a : \tSL(2,\bC) \to G_\bC\}_{\a=1}^\rho$ of commuting horizontal $\tSL(2)$'s.  Let $\{ \hat\sN_\a(u,w) \,,\, \hat\sY_\a(u,w) \,,\, \hat\sN_\a^+(u,w)\}_{\a=1}^\rho$ denote the $\nu_\a$--images of the standard generators of $\fsl(2,\bR)$.  Each of $\hat \sN_\a$, $\hat\sY_\a$ and $\hat\sN_\a^+$ is a $\fg_\bR$--valued member of $\sA$.   Furthermore, 
\begin{equation}\label{E:sl2w}
  \left\{ \hat\sN_\a(u,w) \,,\, \hat\sY_\a(u,w) \,,\, \hat\sN_\a^+(u,w)
  \right\}
  \ = \ \tAd_{\psi(w)}^{-1} \left\{ 
  \hat\sN_\a(u,0) \,,\, \hat\sY_\a(u,0) \,,\, \hat\sN_\a^+(u,0)
  \right\} \,,
\end{equation}
for all $1 \le \a \le \rho$.

\begin{proof}[Proof of \eqref{E:sl2w}]
This is a consequence of \eqref{E:Fw} and \cite[(4.75)]{CKS1}: note that the functions $T = T(W,F)$ and $\Phi = \Phi(Y,F)$ of \cite[p.~506]{CKS1} are $G_\bR$--equivariant.
\end{proof}

Define
\begin{equation}\label{E:bYa}
  \mathbf{Y}^\a(u,w) \ := \ 
  \sum_{\b=1}^\a \hat\sY_\b(u,w) \ \stackrel{\eqref{E:sl2w}}{=} \ 
  \tAd_{\psi(w)}^{-1} \bY^\a(u,0) \,.
\end{equation}
It follows directly from the CKS--construction that 
\begin{equation} \label{E:Yw}
  \bY^\rho(u,w) \quad \hbox{is the element of $\fg_\bR$ acting on 
  $\tilde \fg_{w,\ell}$ by $\ell \in \bZ$;}
\end{equation}
in particular, $\bY^\rho_w := \bY^\rho(u,w)$ is independent of $u$.  

\subsubsection{Eigenspace decompositions}\label{S:esp}

Set
\begin{equation}\label{E:dfne}
  \e(y,w) \ = \ \e(s,u;w) \ := \ 
  \exp\left( \half \sum_\a \log s_\a \bY^\a(u,w) \right) 
  \ = \ \tAd_{\psi(w)}^{-1} \e(s,u;0)\,.
\end{equation}
Recall that the eigenvalues of $\bY^\a(u,w)$ are integers.  Since $\bY^\a$ depends real-analytically on $(u,w)$, both the eigenvalues and their multiplicities are independent of $(u,w)$, and the eigenspaces depend real-analytically on $(u,w)$.
\begin{equation}\label{E:Ye}
\begin{array}{l}
  \hbox{If $\bY^\a(u,w)$ acts by the eigenvalue $e_\a \in \bZ$,}\\
  \hbox{then $\e(s,u;w)$ acts by the eigenvalue $\prod_\a s_\a^{e_\a/2}$.}
\end{array}
\end{equation}
So $\e(s,u;w)$ is a $G_\bR$--valued function in $\sL$.  Additionally $\bY^\a(u,w) \in \tilde\fg^{0,0}_{w,\bR}$, so that $\bY^\a(u,w)$ preserves the Deligne splittings \eqref{E:DSw}.  Consequently, 
\begin{equation}\label{E:eDS}
  \e(s,u;w) \ \in \ 
  G_\bR^{\mathrm{DS}_w} \ := \ 
  \{ g \in G_\bR \ | \ g( \tilde I^{p,q}_w ) = 
  \tilde I^{p,q}_w \,,\ \forall \ p,q \} \,.
\end{equation}
This has two consequences: first, 
\begin{equation}\label{E:eFw}
  \e(s,u;w) \cdot \tilde F_w \ = \ \tilde F_w \ = \ \e(s,u;w)^{-1} 
  \cdot \tilde F_w \,.
\end{equation}
Second, since $\e(s,u;w)$ is semisimple, 
\begin{equation}\label{E:ees}
  \begin{array}{l}
  \hbox{$\tilde I^{p,q}_w$ decomposes into a direct sum of 
  $\e(s,u;w)$--eigenspaces.}
  \end{array}
\end{equation}

Since the $\{\bY_\a(u,w)\}_{\a=1}^\rho$ are commuting semisimple endomorphisms, the Lie algebra admits a simultaneous eigenspace decomposition
\[
  \fg_\bR \ = \ \bigoplus_{e_1,\ldots,e_\rho \in\bZ} 
  \fg_{e_1,\ldots,e_\rho}(u,w) \,,
\]
with $\tad\,\bY^\a$ acting on $\fg_{e_1,\ldots,e_\rho}$ by the eigenvalue $e_\a \in \bZ$.  In particular,
\begin{equation}\label{E:e}
  \tAd_{\e(s,u;w)} \hbox{ acts on } \fg_{e_1,\ldots,e_\rho}(u,w) 
  \hbox{ by the eigenvalue } \prod_\a s_\a^{e_\a/2} \,.
\end{equation}
The eigenspaces $\fg_{e_1,\ldots,e_\rho}(u,w)$ depend real-analytically on $(u,w)$, and \eqref{E:bYa} implies 
\[
  \fg_{e_1,\ldots,e_\rho}(u,w) \ = \ 
  \tAd_{\psi(w)}^{-1} \fg_{e_1,\ldots,e_\rho}(u,0) \,.
\]

Recollect that the common intersection of the weight filtrations
\begin{equation}\label{E:w}
  \mathfrak{w} \ := \ 
  \bigcap_{\a=1}^\rho W_0 \left( \tad(\sN_1 + \cdots + \sN_\a) \right) \ = \ 
  \bigoplus_{e_1,\ldots,e_\rho \ge 0} \fg_{-e_1,\ldots,-e_\rho}(u,w)
\end{equation}
is the direct sum of the eigenspaces for the nonpositive eigenvalues.  The following is the  \emph{key lemma} in our analysis of the asymptotic behavior of the curvature matrix.

\begin{lemma}\label{L:key1}
Suppose that $U(u,w) \in \mathfrak{w}$ depends continuously on $(u,w)$.  Let $U'(u,w)$ denote the component of $U(u,w)$ taking value in 
\[
  \fg_{0,\ldots,0}(u,w) \ = \ \{ X \in \fg \ | \ 
  [\bY^\a(u,w),X] = 0 \,,\ \forall \ \a\}
  \ = \ \bigcap_\a \tker(\tad\,\bY^\a(u,w))\,,
\]
with respect to the decomposition \eqref{E:w}.  Then \eqref{E:e} yields
\begin{equation} \nonumber 
  \e(s,u;w) \,\exp(U(u,w))\,\e(s,u;w)^{-1}
  \ \cksto \ \exp(U'(u,w)) \,,
\end{equation}
\emph{cf.~Remark \ref{R:unifconvg}}.
\end{lemma}

\begin{proof}
This is an immediate consequence of \eqref{E:e}.  
\end{proof}

\subsubsection{Asymptotic behavior}

We will find it useful to write
\begin{subequations}\label{SE:P5b}
\begin{equation}
  \eta(t)\,\zeta(t,w) \ = \ \psi(w) \exp(\textstyle \sum x_j N_j )\nu_w(y) \xi(t,w) \psi(w)^{-1} \,,
\end{equation}
(cf.~\S\ref{S:Dsplit}), where
\begin{equation}
\renewcommand{\arraystretch}{1.3}
\begin{array}{rcl}
  \nu_w(y) & := & 
  \exp \bi \left(\tAd_{\psi(w)}^{-1} \d_0 \ + \ 
  \textstyle\sum_{i=1}^k y_i N_i \right) \\
  \xi(t,w) & := & \psi(w)^{-1} \z(t,w) \psi(w) 
  \ = \ \exp \tAd_{\psi(w)}^{-1} X(t,w) \,.
\end{array}
\end{equation}
This allows us to rewrite \eqref{SE:P5a} as
\begin{equation}
  \Phi(t,w) \ = \ \psi(w) \exp(\textstyle \sum x_j N_j )\nu_w(y) \xi(t,w) 
  \cdot \tilde F_w \,.
\end{equation}
\end{subequations}
Define
\[
  \mu(s,u;w) \ := \ \e(s,u;w) \xi(0,w) \e(s,u;w)^{-1} \,.
\]

Recall that $X(0,w) = \log \zeta(0,w)$ takes value in the centralizer $\fz = \cap_j\,\tker(\tad N_j)$ of the $\{N_j\}_{j=1}^k$ (\S\ref{S:horiz}).  Notice that 
\begin{equation}\label{E:zinw}
  \fz \ \subset \ \bigcap_{\a=1}^\rho
  \tker \left(\tad(\sN_1 + \cdots + \sN_\a) \right) \ \subset \ 
  \bigcap_{\a=1}^\rho W_0 \left( \tad(\sN_1 + \cdots + \sN_\a) \right) 
  \ = \ \mathfrak{w}\,.
\end{equation}
Let $X_u(w)$ denote the component of $X(0,w)$ taking value in $\fg_{0,\ldots,0}(u,0)$ with respect to the decomposition $\fg_\bC = \op\,\fg_{e_1,\ldots,e_\rho}(u,0)$, and set $\zeta_u(w) := \exp(X_u(w))$.  Then \eqref{E:bYa} implies $U'(u,w) = \tAd_{\psi(w)}^{-1} X_u(w)$.

\begin{lemma} \label{L:mu}
Both $\mu$ and $\mu^{-1}$ belong to $\sL^\flat$, and 
\[
  \mu(s,u;w) \ \cksto \ 
  \exp\tAd_{\psi(w)}^{-1} X_u(w) \ = \ \psi(w)^{-1} \z_u(w) \psi(w) \,.
\]
\end{lemma}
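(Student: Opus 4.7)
First, I would recognize $\mu(s,u;w)$ as exactly the object to which the abstract convergence statement \eqref{E:lim2} can be applied. Setting $U(u,w) := \tAd_{\psi(w)}^{-1} X(0,w)$ — which happens to be independent of $u$, but this is harmless — we have $\xi(0,w) = \exp U(u,w)$ and hence
\[
  \mu(s,u;w) \ = \ \e(s,u;w)\,\exp(U(u,w))\,\e(s,u;w)^{-1} \,.
\]
To invoke \eqref{E:lim2} I need $U(u,w)\in\mathfrak{w}$ and real-analytic dependence on $(u,w)$; both follow quickly from what is already established. For the rest, the $\sL^\flat$ claim comes from unpacking the eigenspace decomposition together with nilpotency of $U$.

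The hypothesis check uses horizontality in a crucial way. By \S\ref{S:hor}, $X(0,w)\in\fz$, and since $\psi(w)\in\exp(\fz_\bR)$ (cf.~\S\ref{S:reltFw}), conjugation by $\psi(w)^{-1}$ preserves $\fz$; hence $U(u,w)\in\fz$ and then $U(u,w)\in\mathfrak{w}$ by \eqref{E:zinw}. Real-analyticity in $w$ is clear from holomorphy of $X(0,w)$ and real-analyticity of $\psi(w)$, while the $u$-dependence is trivial. Applying \eqref{E:lim2} then gives
\[
  \mu(s,u;w) \ \cksto \ \exp(U'(u,w)) \,,
\]
where $U'(u,w)$ is the $\fg_{0,\ldots,0}(u,w)$-component of $U(u,w)$. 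The identification $U'(u,w) = \tAd_{\psi(w)}^{-1} X_u(w)$ is already recorded in the paragraph preceding the lemma (as a consequence of \eqref{E:bYa}), yielding the asserted limit $\psi(w)^{-1}\zeta_u(w)\psi(w)$.

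For the $\sL^\flat$ claim I would decompose $U(u,w) = \sum_{e_1,\ldots,e_\rho\ge 0} U_{-e_1,\ldots,-e_\rho}(u,w)$ with $U_{-e_1,\ldots,-e_\rho}(u,w)\in\fg_{-e_1,\ldots,-e_\rho}(u,w)$, using \eqref{E:w}; the components depend real-analytically on $(u,w)$ because the eigenspaces do. By \eqref{E:e},
\[
  \tAd_{\e(s,u;w)} U(u,w) \ = \ \sum_{e_1,\ldots,e_\rho\ge 0} \prod_\a s_\a^{-e_\a/2}\, U_{-e_1,\ldots,-e_\rho}(u,w) \,,
\]
which is a polynomial in $\{s_\a^{-1/2}\}$ with $\sA$-valued coefficients. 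Since $X(0,w)\in\tilde\fn_0$ is nilpotent, so is $U$, hence so is $\tAd_\e U$; therefore $\mu = \exp(\tAd_\e U)$ expands as a \emph{finite} polynomial in $\tAd_\e U$, placing each matrix entry of $\mu$ in $\sL^\flat$. The same reasoning applied to $-U$ handles $\mu^{-1} = \exp(-\tAd_\e U)$. The hard part — or rather, the essential geometric input — is the horizontality containment $\fz\subset\mathfrak{w}$: without it, positive half-integer powers of $s_\a$ could appear in $\tAd_\e U$ and $\mu$ would blow up as $s_\a\to\infty$, so both assertions of the lemma rest on this one-line containment.
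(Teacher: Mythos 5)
Your argument is correct and is essentially the paper's own proof: we also write $\xi(0,w)=\exp(U(w))$ with $U(w)=\tAd_{\psi(w)}^{-1}X(0,w)$ and deduce the limit from \eqref{E:lim2}, with the containment $\fz\subset\mathfrak{w}$ of \eqref{E:zinw} supplying the hypothesis. Your explicit verification of the $\sL^\flat$ membership via the eigenvalue formula \eqref{E:e} and the nilpotency of $U$ is exactly the intended (if unwritten) justification of that clause.
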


\begin{proof}
Write $\xi(0,w) = \exp(U(w))$ with $U(w) = \tAd_{\psi(w)}^{-1} X(0,w)$.  The result now follows from Lemma \ref{L:key1}
\end{proof}

Define $\lambda = \lambda_1 \cdot \lambda_2$ by 
\begin{eqnarray*}
  \lambda_1(t,w) & := & \e(s,u;w) \, \nu_w(y) \, \e(s,u;w)^{-1} \\
  \lambda_2(t,w) & := & \e(s,u;w) \, \xi(t,w) \, \e(s,u;w)^{-1} \,,
\end{eqnarray*}
so that 
\begin{equation}\label{E:Phi-lam}
\renewcommand{\arraystretch}{1.5}
\begin{array}{rcl}
  \eta(t)\,\zeta(t,w) & = & 
  \psi(w) \exp\left(\textstyle\sum x_i N_i\right) 
  \e(t,w)^{-1} \lambda(t,w) \e(t,w) \psi(w)^{-1} \,,\\
  \Phi(t,w) & = & \psi(w) \exp\left(\textstyle\sum x_i N_i\right) 
  \e(t,w)^{-1} \lambda(t,w) \e(t,w) \cdot \tilde F_w \,.
\end{array}
\end{equation}

Set
\begin{equation}\label{E:bN}
  \bN(u,w) \ := \ \sum_\a \hat\sN_\a(u,w) 
  \ \stackrel{\eqref{E:sl2w}}{=} \ \tAd_{\psi(w)}^{-1} \bN(u,0)  \,.
\end{equation}
To simplify notation we set
\[
  \bN(u) \ := \ \bN(u,0) \,.
\]

\begin{lemma}[{\cite[\S5]{CKS1}}] \label{L:cks2}
Both $\lambda$ and $\lambda^{-1}$ belong to $(\sO \otimes \sL)^\flat$, and 
\[
  \lambda(t,w) \ \cksto \ \psi(w)^{-1} \exp\left(\bi\,\bN(u,0) \right) 
  \zeta_u(w) \psi(w) \,.
\]
\end{lemma}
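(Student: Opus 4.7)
I would prove both claims (membership in $(\sO\otimes\sL)^\flat$ and the asymptotic formula) by analyzing the factors $\lambda_1 = \e\,\nu_w(y)\,\e^{-1}$ and $\lambda_2 = \e\,\xi(t,w)\,\e^{-1}$ separately via the simultaneous $\{\bY^\a\}$-eigenspace decomposition of \S\ref{S:esp}, and then multiplying the limits. The guiding idea is that $\tAd_\e$ acts on $\fg_{e_1,\ldots,e_\rho}(u,w)$ by $\prod_\b s_\b^{e_\b/2}$, producing Laurent polynomials in $s^{1/2}$, while the presence of a $t_j$ factor supplies exponential decay in $y_j$ that kills any positive powers of $s_\b^{1/2}$ arising from the eigenspace decomposition. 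The same analysis run with signs flipped handles $\lambda^{-1}$.

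For $\lambda_2$: factor $\xi(t,w) = \xi(0,w)\cdot\hat\xi(t,w)$ where $\hat\xi(t,w) = \exp\hat X(t,w)$, with $\hat X$ holomorphic on $\overline\sU$ and $\hat X(0,w)=0$; then $\lambda_2 = \mu(s,u;w)\cdot\e\,\hat\xi(t,w)\,\e^{-1}$ and the first factor is controlled by Lemma \ref{L:mu}. Taylor-expand $\hat X(t,w) = \sum_{j=1}^k t_j\,\hat X_j(t,w)$ with $\hat X_j\in\sO\otimes\fg_\bC$ and decompose each $\hat X_j$ along $\fg_{e_1,\ldots,e_\rho}(u,w)$; conjugation by $\e$ then produces a sum of monomials $t_j\prod_\b s_\b^{e_\b/2}$ with $\sO\otimes\sA$-valued coefficients. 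The CKS region inequality $y_j\ge y_{k_\a}$ for $j\le k_\a$ gives $|t_j|\le e^{-2\pi y_{k_\a}}$, dominating any polynomial growth in $s_\b^{1/2}$ and placing each such monomial in the generator set of $(\sO\otimes\sL)^\flat$. The same exponential decay forces $\e\,\hat\xi(t,w)\,\e^{-1}\cksto\mathrm{id}$, so by Lemma \ref{L:mu}, $\lambda_2\cksto\psi(w)^{-1}\zeta_u(w)\psi(w)$.

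For $\lambda_1$: since $\delta_0$ commutes with each $N_j$ (\S\ref{S:Dsplit}) and $\psi(w)\in\cZ_\bR$ commutes with each $N_j$, the two summands in the exponent of $\nu_w(y)$ commute, so
\[
  \lambda_1 \ = \ \exp\bigl(\bi\,\tAd_\e\tAd_{\psi(w)}^{-1}\delta_0\bigr)\cdot\exp\bigl(\bi\,\tAd_\e\textstyle\sum_{j=1}^k y_j N_j\bigr).
\]
Rewrite $\sum_j y_j N_j = \sum_\a y_{k_\a}\sN_\a(u)$ with $y_{k_\a} = s_\a s_{\a+1}\cdots s_\rho$, and decompose each $\sN_\a(u)$ into its $\fg_{e_1,\ldots,e_\rho}(u,w)$-components. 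The several-variable $\tSL(2)$-orbit theorem \cite[(4.20)]{CKS1}, together with the estimates of \cite[\S5]{CKS1}, ensures that after multiplication by $y_{k_\a}$ and application of $\tAd_\e$ only non-positive powers of the $s_\b^{1/2}$ can contribute, placing the result in $\sL^\flat$; the unique term surviving in the $s\to\infty$ limit is $\hat\sN_\a(u,w)$, and summing over $\a$ gives $\bN(u,w)$. A parallel (and simpler) analysis, using $\delta_0\in\bigoplus_{p,q<0}\tilde\fg_0^{p,q}\cap\fz_\bR\subset\mathfrak{w}$, shows the $\delta_0$-factor tends to $\mathrm{id}$. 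Exponentiating and invoking \eqref{E:bN} yields $\lambda_1\cksto\exp(\bi\,\bN(u,w)) = \psi(w)^{-1}\exp(\bi\,\bN(u,0))\psi(w)$, and multiplying by the $\lambda_2$ limit gives the formula of the lemma.

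The principal technical obstacle is the $\lambda_1$ step: establishing that the non-constant contributions to $y_{k_\a}\,\tAd_\e\sN_\a(u)$ consist only of monomials $\prod_\b s_\b^{m_\b/2}$ with all $m_\b\le0$ (and not all zero) and with real-analytic $\sA$-valued coefficients, so that they belong to $\sL^\flat$ and vanish as $s\to\infty$. This is where the refined weight estimates of \cite[\S5]{CKS1} and the relative weight filtration property of \S\ref{S:rwfp}—which governs how the $\bY^\b$-weights interact and ensures compatibility of the $W(N_I)$ across the nested strata—are essential.
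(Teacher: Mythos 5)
Your factorization $\lambda=\lambda_1\lambda_2$ and your treatment of $\lambda_1$ (which is exactly \cite[(5.12)]{CKS1}, invoked the same way in the paper) match the intended argument; the gap is in the $\lambda_2$ step. After writing $\tAd_{\e}\hat X(t,w)$ as a sum of monomials $t_j\,s_1^{m_1/2}\cdots s_\rho^{m_\rho/2}$ with $\sA$-valued coefficients, your assertion that the exponential decay of $|t_j|$ ``dominates any polynomial growth in $s_\b^{1/2}$'' fails whenever $m_\b>0$ for some $\b$ with $k_\b<j$: if $k_{\a-1}<j\le k_\a$, then $|t_j|=e^{-2\pi u^j_\a s_\a\cdots s_\rho}$ involves only $s_\a,\ldots,s_\rho$, and $s_\b$ for $\b<\a$ is a free parameter in the region $(\sH^k)^K_c$ (take $s_\b=\exp\exp(s_\a\cdots s_\rho)$ to see that $t_j s_\b^{1/2}$ is unbounded there). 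This is precisely why the definition of $(\sO\ot\sL)^\flat$ carries the support condition ``$m_\a\ne0$ only if $j\le i_\a$.'' Your Taylor expansion gives no reason for the eigencomponents of $\hat X_j$ with positive $\bY^\b$-weight, $\b<\a$, to vanish, so neither the membership claim nor the limit $\e\,\hat\xi\,\e^{-1}\cksto\mathrm{id}$ follows termwise.

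The paper (following CKS, pp.~511--512) avoids estimating $\e\,\xi(t,w)\,\e^{-1}$ directly. It first obtains $\lambda_1\cksto\exp(\bi\,\bN(u,w))$ from (5.12) and $\mu\cksto\psi(w)^{-1}\zeta_u(w)\psi(w)$ from Lemma \ref{L:mu}, and then invokes the nilpotent orbit theorem \cite[(1.15.iii)]{CKS1} in the form $d(\Phi(t,w),\theta(t,w))\cksto0$ for a $G_\bR$-invariant distance $d$ on $D$. Since $d$ is insensitive to the common left factor $\psi(w)\exp(\sum x_jN_j)\e^{-1}$, this gives $d\bigl(\lambda(t,w)\cdot\tilde F_w,\ \lambda_1(t,w)\mu(s,u;w)\cdot\tilde F_w\bigr)\cksto0$, hence $\lambda_2(t,w)\cdot\tilde F_w\cksto\psi(w)^{-1}\zeta_u(w)\psi(w)\cdot\tilde F_w$; convergence of flags is then upgraded to convergence of group elements because both $\lambda_2$ and its putative limit lie in $\exp(\tilde\fn_w)$ and $\fg_\bC=\tilde\fn_w\op\tilde\fp_w$. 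The $G_\bR$-invariant metric absorbs exactly the large conjugating factors that defeat your termwise Lie-algebra estimate; some such global input from the nilpotent orbit theorem is needed to close the argument.
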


\noindent When $\ell=0$, the lemma is proved by Cattani--Kaplan--Schmid in \cite[pp.~511--512]{CKS1}.  Their argument extends to the general case with only minor modification; we sketch the proof here for completeness.

\begin{proof}
The proof of \cite[(5.12)]{CKS1} applies here to yield
\begin{equation}\label{E:nulim}
\renewcommand{\arraystretch}{1.5}
\begin{array}{rcl}
  \tAd_{\e(s,u;w)} \tAd_{\psi(w)}^{-1} \d_0  & \cksto &  0 \,,\\
  \tAd_{\e(s,u;w)} \textstyle\sum_{j=1}^k y_j N_j & \cksto & \bN(u,w) \,,
\end{array}
\end{equation}
so that 
\begin{equation} \label{E:lim1}
  \lambda_1(t,w) \ \cksto \ \exp(\bi\,\bN(u,w)) \,.
\end{equation}
Briefly, \eqref{E:nulim} is a consequence of Lemma \ref{L:key1}, and the facts (a) that
\[
  \tAd_{\psi(w)}^{-1}\d_0 \ \ \in \ \ 
  \fz \ \cap \ \bigoplus_{p,q\le-1} \tilde\fg^{p,q}_w 
  \ \ \subset \ \
  \bigoplus_{\mystack{e_1,\cdots,e_{\rho-1}\ge0}{e_\rho\ge2}}
  \fg_{-e_1,\ldots,-e_\rho}(u,w)
\]
and (b) the observation \eqref{E:NvsN} and the fact \cite[(4.20.iii)]{CKS1} that $\hat\sN_\a(u,w)$ is the component of $\sN_a(u)$ taking value in 
\[
  \fz \ \cap \ \bigcap_{\b=1}^{\a-1}\tker(\tad\,\bY^\b(u,w))
  \ \subset \ 
  \bigoplus_{e_\a,\ldots,e_\rho\ge0} 
  \fg_{0,\ldots,0,-e_\a,\ldots,-e_\rho}(u,w) \,.
\]

From \eqref{SE:P5a} and \eqref{SE:P5b} we see that the nilpotent orbit asymptotically approximating $\Phi(t,w)$ is
\begin{eqnarray*}
  \theta(t,w) & = & 
  \eta(t)\zeta(0,w) \cdot \tilde F_0 \\
  & = & 
  \psi(w) \exp(\textstyle \sum x_j N_j )\nu_w(y) \xi(0,w) 
  \cdot \tilde F_w \\
  & = & 
  \psi(w) \exp(\textstyle \sum x_j N_j )\,
  \e(s,u;w)^{-1} \lambda_1(s,u;w) \e(s,u;w) \,\xi(0,w) \cdot \tilde F_w \,.
\end{eqnarray*}
Fix a $G_\bR$--invariant distance $d$ on $D$.  Keeping \eqref{E:eFw} and \eqref{E:Phi-lam} in mind, the nilpotent orbit theorem \cite[(1.15.iii)]{CKS1} implies 
\[
  d \left( \Phi(t,w) \,,\, \theta(t,w) \right) \ = \ 
  d \left( \lambda(t,w)\cdot\tilde F_w \,,\,
  \lambda_1(t,w)\, \mu(s,u;w) \cdot \tilde F_w \right)
  \ \cksto \ 0 \,.
\]
It then follows from Lemma \ref{L:mu} and \eqref{E:lim1} that
\[
  \lambda(t,w)\cdot\tilde F_w \ \cksto \ 
  \exp(\bi\,\bN(u,w)) \cdot \psi(w)^{-1} \z_u(w) \psi(w) \cdot \tilde F_w \,.
\]
Therefore
\[
  \lambda_2(t,w)\cdot\tilde F_w \ \cksto \ 
  \psi(w)^{-1} \z_u(w) \psi(w) \cdot \tilde F_w \,.
\]
Since both $\lambda_2(t,w)$ and $\psi(w)^{-1} \z_u(w) \psi(w)$ take value in $\exp(\tilde\fn_w)$, it follows from \eqref{E:gwdecomp} that
\begin{equation}\label{E:lim3}
  \lambda_2(t,w) \ \cksto \ \psi(w)^{-1} \z_u(w) \psi(w) \,.
\end{equation}
The lemma now follows from \eqref{E:bN}, \eqref{E:lim1} and \eqref{E:lim3}.
\end{proof}

\subsection*{Acknowledgements}
This is a significant revision of our 2017 draft (arXiv:1708.09523v1). While our claims have been scaled back, we remain optimistic on the validity of the main conjecture. We thank several people for feedback on the original version, particularly Wushi Goldring and Patrick Brosnan for some very relevant comments. We are also grateful to the two anonymous referees who had very pertinent comments that helped us significantly improve our manuscript. 

Beyond this revision, several significant developments have happened since our original draft. Specifically, some of the material has been spun off and expanded (see esp. \cite{phvb} and \cite{GGRinfty}). On a different track,  Bakker--Brunebarbe--Tsimerman \cite{BBT18} have established important results that overlap with some of our considerations (by significantly different methods). In the interest of concision and progressing the field, we have decided to freely use the results of \cite{GGRinfty} and \cite{BBT18} here (while \cite{phvb} is somewhat dependent on Section \ref{S:IV} of our paper). 

While the subject of periods and moduli is central in algebraic geometry with a venerable tradition, our interest in the questions addressed here was rekindled by an NSF FRG grant for which two of the authors were PIs (RL/DMS-1361143 and CR/DMS-1361120), and the other two (MG and PG) were frequent collaborators. We acknowledge the NSF support and thank the other PIs (P. Brosnan, M. Kerr, and G. Pearlstein) for many stimulating discussions and sharing with us some related ideas. 
 
\bibliographystyle{amsalpha}

\begin{thebibliography}{GGLR17}


\bibitem[AMRT10]{AMRT}
A.~Ash, D.~Mumford, M.~Rapoport, and Y.-S.~Tai, \emph{Smooth
  compactifications of locally symmetric varieties}, second ed., Cambridge
  Mathematical Library, Cambridge University Press, Cambridge, 2010, With the
  collaboration of Peter Scholze. 
  
\bibitem[BB66]{BB}
W.~L. Baily, Jr. and A.~Borel, \emph{Compactification of arithmetic quotients
  of bounded symmetric domains}, Ann. of Math. (2) \textbf{84} (1966),
  442--528.
  
\bibitem[BBT18]{BBT18}
B.~Bakker, Y.~Brunebarbe and J.~Tsimerman, 
\emph{o-minimal {GAGA} and a conjecture of {G}riffiths}, 
arXiv:1811.12230, 2018.



  

\bibitem[Bor72]{Bextn}
A.~Borel, \emph{Some metric properties of arithmetic quotients of symmetric
  spaces and an extension theorem}, J. Differential Geometry \textbf{6} (1972),
  543--560.
  
\bibitem[Bou02]{Bouck}
S.~Boucksom, 
\emph{On the volumne of a line bundle},
Internat.~J.~Math., {\bf 3}, no.~10 (2002), 
1043--1063.

\bibitem[BPR16]{BPR}
P.~Brosnan, G.~Pearlstein and C.~Robles, 
  \emph{Nilpotent cones and their representation theory}, 
  {Hodge theory and {$L^2$}-analysis},
  {Adv. Lect. Math. (ALM)},
  \textbf{39},
  {Int. Press, Somerville, MA}, 
  {2017},
  pp.~{151--205}.




\bibitem[CMSP]{CMSP}
J.~Carlson, S.~M\"uller-Stach and C.~Peters, 
\emph{Period mappings and period domains}, 
Cambridge University Press, Cambridge, 2003.

\bibitem[Cat84]{Ca0}
E.~Cattani, \emph{Mixed {H}odge structures, compactifications and monodromy
  weight filtration}, Topics in transcendental algebraic geometry ({P}rinceton,
  {N}.{J}., 1981/1982), Ann. of Math. Stud., vol. 106, Princeton Univ. Press,
  Princeton, NJ, 1984, pp.~75--100.
  
\bibitem[Cat14]{Ca}
\bysame, \emph{Introduction to variations of {H}odge structure}, Hodge
  theory, Math. Notes, vol.~49, Princeton Univ. Press, Princeton, NJ, 2014,
  pp.~297--332.

\bibitem[CDK95]{CDK}
E.~Cattani, P.~Deligne, and A.~Kaplan, \emph{On the locus of {H}odge classes},
  J. Amer. Math. Soc. \textbf{8} (1995), no.~2, 483--506.


\bibitem[CKS86]{CKS1}
E.~Cattani, A.~Kaplan and W.~Schmid, \emph{Degeneration of {H}odge
  structures}, Ann. of Math. (2) \textbf{123} (1986), no.~3, 457--535.
  

\bibitem[Del70]{del}
P.~Deligne, \emph{\'Equations diff\'erentielles \`a points singuliers
  r\'eguliers}, Lecture Notes in Mathematics, Vol. 163, Springer-Verlag,
  Berlin-New York, 1970.
  
\bibitem[Dem85]{Dem85}
J.~P.~Demailly, 
\emph{Champs magn\'etiques et in\'egalit\'es de Morse pour la d${}^{\prime\prime}$-cohomologie},
C.~R.~Acad.~Sci.~Paris S\'er.~I Math.~ {bf 301} (1985), no.~4, 119-122.

\bibitem[Dem12]{De1}
\bysame, \emph{Analytic methods in algebraic geometry}, Surveys of
  Modern Mathematics, vol.~1, International Press, Somerville, MA; Higher
  Education Press, Beijing, 2012.
  


\bibitem[FPR17]{FPR17}
M.~Franciosi, R.~Pardini and S.~Rollenske,
\emph{Gorenstein stable surfaces with $K_X^2 =1$ and $p_g>0$}, Math.~Nachr.~290 (2017), no.~5-6, 794--814.


\bibitem[Fuj78]{F}
T.~Fujita, \emph{On {K}\"ahler fiber spaces over curves}, J. Math. Soc. Japan
  \textbf{30} (1978), no.~4, 779--794.
  
\bibitem[Gra62]{Gra62}
H.~Grauert, 
\emph{\"{U}ber {M}odifikationen und exzeptionelle analytische {M}engen}, 
Math.~ Ann.~ {\bf 146} (1962), 331--368.

\bibitem[Gra83]{Gra83}
\bysame, 
\emph{Set theoretic complex equivalence relations},
Math.~Ann.~{\bf 265} (1983), 137--148.
  
\bibitem[GR84]{GR84}
H.~Grauert and R.~Remmert,
\emph{Coherent Analytic Sheaves},
Springer-Verlag, Berlin, 1984.

\bibitem[GG16]{GG}
M.~Green and P.~A.~Griffiths, \emph{Deformation theory and limiting mixed
  {H}odge structures}, Recent advances in {H}odge theory, London Math. Soc.
  Lecture Note Ser., vol. 427, Cambridge Univ. Press, Cambridge, 2016,
  pp.~88--133.

\bibitem[GG20]{phvb}
\bysame, 
  \emph{Positivity of vector bundles and {H}odge theory}, arXiv:1803.07405,
  to appear in  Internat. J. Math.
  
\bibitem[GGK12]{GGK}
M.~Green, P.~A. Griffiths and M.~Kerr,
\emph{Mumford-{T}ate groups and domains: their geometry and arithmetic}, Princeton University Press, 2012.

\bibitem[GGR22]{GGRinfty}
M.~Green, P.~A. Griffiths and C.~Robles,
\emph{The global asymptotic structure of period mappings},
arXiv:2010.06720, 2022.

\bibitem[GGR21a]{GGRlb}
\bysame,
\emph{Natural line bundles on completions of period mappings},
arXiv:2102.06310, 2021.

\bibitem[Gri69]{G1}
P.~A.~Griffiths, \emph{Hermitian differential geometry, {C}hern classes, and
  positive vector bundles}, Global {A}nalysis ({P}apers in {H}onor of {K}.
  {K}odaira), Univ. Tokyo Press, Tokyo, 1969, pp.~185--251.

\bibitem[Gri70a]{G2}
\bysame, \emph{Periods of integrals on algebraic manifolds: {S}ummary of main
  results and discussion of open problems}, Bull. Amer. Math. Soc. \textbf{76}
  (1970), 228--296.

\bibitem[Gri70b]{PoI3}
\bysame, \emph{Periods of integrals on algebraic manifolds. {III}.
  {S}ome global differential-geometric properties of the period mapping}, Inst.
  Hautes {\'E}tudes Sci. Publ. Math. (1970), no.~38, 125--180. 
  
\bibitem[Gri21]{PGgaf3}
\bysame, \emph{Hodge theory and moduli {S}ymmetries and {M}oduli
  {S}paces of {A}lgebraic {V}arieties}, Geometry at the frontier symmetries and
  moduli spaces of algebraic varieties, Contemp. Math., vol. 766, Amer. Math.
  Soc.,  2021, pp.~163--200.
  

\bibitem[GH94]{GH94}
P.~A.~Griffiths and J. Harris,
\emph{Principles of algebraic geometry}, 
John Wiley \& Sons Inc., 1994.


\bibitem[GRT14]{GRT}
P.~A.~Griffiths, C.~Robles and D.~Toledo, \emph{Quotients of non-classical
  flag domains are not algebraic}, Algebr. Geom. \textbf{1} (2014), no.~1,
  1--13.

\bibitem[GS69]{GS1}
P.~A.~Griffiths and W.~Schmid, \emph{Locally homogeneous complex manifolds},
  Acta Math. \textbf{123} (1969), 253--302.

\bibitem[GS75]{GS2}
\bysame, \emph{Recent developments in {H}odge theory: a discussion of
  techniques and results}, 31--127.
  
	
\bibitem[Kaw81]{Ka1}
Y.~Kawamata, \emph{Characterization of abelian varieties}, Compositio Math.
  \textbf{43} (1981), no.~2, 253--276.

\bibitem[Kaw83]{Ka2}
\bysame, \emph{Kodaira dimension of certain algebraic fiber spaces}, J. Fac.
  Sci. Univ. Tokyo Sect. IA Math. \textbf{30} (1983), no.~1, 1--24.

\bibitem[Kaw85]{Ka3}
\bysame, \emph{Minimal models and the {K}odaira dimension of algebraic fiber
  spaces}, J. Reine Angew. Math. \textbf{363} (1985), 1--46.
  

\bibitem[KK10]{KK}
J.~Koll\'ar and S.~Kov\'acs, \emph{Log canonical singularities are {D}u
  {B}ois}, J. Amer. Math. Soc. \textbf{23} (2010), no.~3, 791--813.
  
\bibitem[KL19]{KL1}
M.~Kerr and R.~Laza, \emph{Hodge theory of degenerations, ({I}): {C}onsequences of the decomposition theorem}, arXiv:1901.01896, to appear in Selecta Math.
\bibitem[KLS19]{KLS}
M.~Kerr, R.~Laza, and M.~Saito \emph{Deformation of rational singularities and Hodge structure}, arXiv:1906.03917.

\bibitem[KM98]{KM}
J.~Koll\'ar and S.~Mori, \emph{Birational geometry of algebraic varieties},
  Cambridge Tracts in Mathematics, vol. 134, Cambridge University Press,
  Cambridge, 1998.

\bibitem[KNU21]{KNU}
K.~Kato and S.~Usui, \emph{Classifying spaces of degenerating mixed {H}odge structures, {V}: {E}xtended period domains and algebraic groups}, arXiv:2107.03561, 2021.
  
  
\bibitem[Kol87]{Ko2}
J.~Koll\'ar, \emph{Subadditivity of the {K}odaira dimension: fibers of general
  type}, Algebraic geometry, {S}endai, 1985, Adv. Stud. Pure Math., vol.~10,
  North-Holland, Amsterdam, 1987, pp.~361--398.

\bibitem[Kol12]{kollar_quot}
\bysame, \emph{Quotients by finite equivalence relations}, Current developments
  in algebraic geometry, Math. Sci. Res. Inst. Publ., vol.~59, Cambridge Univ.
  Press, Cambridge, 2012, With an appendix by Claudiu Raicu, pp.~227--256.

\bibitem[Kol13]{KolSur}
\bysame, \emph{Moduli of varieties of general type}, Handbook of
  moduli. {V}ol. {II}, Adv. Lect. Math. (ALM), vol.~25, Int. Press, Somerville,
  MA, 2013, pp.~131--157.

\bibitem[KP16]{KP16}
M.~Kerr and G.~Pearlstein, \emph{Boundary components of {M}umford-{T}ate
  domains}, Duke Math. J. \textbf{165} (2016), no.~4, 661--721.

\bibitem[KPR19]{KPR}
M.~Kerr, G.~Pearlstein and C.~Robles,
\emph{Polarized relations on horizontal {$\rm SL(2)$}'s}, Doc.~Math.~{\bf 24} (2019).

\bibitem[KU09]{KU}
K.~Kato and S.~Usui, \emph{Classifying spaces of degenerating polarized {H}odge
  structures}, Annals of Mathematics Studies, vol. 169, Princeton University
  Press, Princeton, NJ, 2009.

\bibitem[Laz04]{La04}
R.~Lazarsfeld, \emph{Positivity in algebraic geometry. {II}}, Ergebnisse der
  Mathematik und ihrer Grenzgebiete. 3. Folge., vol.~49, Springer-Verlag,
  Berlin, 2004.

\bibitem[Laz16a]{LazaPer}
R.~Laza, \emph{Perspectives on the construction and compactification of moduli
  spaces}, Compactifying moduli spaces, Adv. Courses Math. CRM Barcelona,
  Birkh\"auser/Springer, Basel, 2016, pp.~1--39.



\bibitem[Mum61]{Mum61}
D.~Mumford, \emph{The topology of normal singularities of an algebraic surface and a criterion for simplicity}, Inst. Hautes \'{E}tudes Sci. Publ. Math., 9 (1961), 5--22.

\bibitem[Pau18]{P}
M.~P\u{a}un, \emph{Singular {H}ermitian metrics and positivity of direct images
  of pluricanonical bundles}, Algebraic geometry: {S}alt {L}ake {C}ity 2015,
  Proc. Sympos. Pure Math., vol.~97, Amer. Math. Soc., Providence, RI, 2018,
  pp.~519--553.
  
\bibitem[PS08]{PS}
C.~Peters and J.~Steenbrink, \emph{Mixed {H}odge structures}, Ergebnisse der
  Mathematik und ihrer Grenzgebiete. 3. Folge., vol.~52, Springer-Verlag,
  Berlin, 2008.

\bibitem[Rob16]{MR3505643}
C.~Robles, \emph{Classification of horizontal {${\rm SL}(2)$}s}, Compos. Math.
  \textbf{152} (2016), no.~5, 918--954.

\bibitem[Sat60]{Sa}
I.~Satake, \emph{On compactifications of the quotient spaces for arithmetically
  defined discontinuous groups}, Ann. of Math. (2) \textbf{72} (1960),
  555--580.

\bibitem[Sch73]{Sc}
W.~Schmid, \emph{Variation of {H}odge structure: the singularities of the
  period mapping}, Invent. Math. \textbf{22} (1973), 211--319.
  
\bibitem[Ser55]{Ser55}
J.-P.~ Serre, 
\emph{Faisceaux alg\'ebriques coh\'erents},
Ann.~of Math.~(2) {\bf 61} (1955) 197--278.

\bibitem[Siu84]{Siu84}
Y.~T.~ Siu, 
\emph{A vanishing theorem for semipositive line bundles over non-K\"ahler manifolds}, 
J.~Diff.Geom. {\bf 19} (1984), no.~2, 431--452.

\bibitem[Siu85]{Siu85}
\bysame, 
\emph{Some recent results in complex manifold theory related to vanishing theorems for the semipositive case}, 
Workshop Bonn 1984, 169--192, Lecture Notes in Math., {\bf 1111}, Springer, Berlin, 1985.

  
\bibitem[Som73]{Som73}
A.~J.~Sommese, 
\emph{Some algebraic properties of the image of a period mapping,}
Rice Univ.~Studies {\bf 59} (1973), no.~2, 123--128.

\bibitem[Som78]{Som}
\bysame, \emph{On the rationality of the period mapping}, Ann. Scuola
  Norm. Sup. Pisa Cl. Sci. (4) \textbf{5} (1978), no.~4, 683--717.

\bibitem[SZ85]{SZ}
J.~Steenbrink and S.~Zucker, \emph{Variation of mixed {H}odge structure. {I}},
  Invent. Math. \textbf{80} (1985), no.~3, 489--542.



\bibitem[Vie83a]{V1}
E.~Viehweg, \emph{Weak positivity and the additivity of the {K}odaira dimension
  for certain fiber spaces}, Algebraic varieties and analytic varieties
  ({T}okyo, 1981), Adv. Stud. Pure Math., vol.~1, North-Holland, Amsterdam,
  1983, pp.~329--353.

\bibitem[Vie83b]{V2}
\bysame, \emph{Weak positivity and the additivity of the {K}odaira dimension.
  {II}. {T}he local {T}orelli map}, Classification of algebraic and analytic
  manifolds ({K}atata, 1982), Progr. Math., vol.~39, Birkh\"auser Boston,
  Boston, MA, 1983, pp.~567--589.

\bibitem[Zuo00]{Zuo}
K.~Zuo, \emph{On the negativity of kernels of {K}odaira-{S}pencer maps on
  {H}odge bundles and applications}, Asian J. Math. \textbf{4} (2000), no.~1,
  279--301, Kodaira's issue.

\end{thebibliography}

\providecommand{\bysame}{\leavevmode\hbox to3em{\hrulefill}\thinspace}
\providecommand{\MR}{\relax\ifhmode\unskip\space\fi MR }

\providecommand{\MRhref}[2]{%
  \href{http://www.ams.org/mathscinet-getitem?mr=#1}{#2}
}
\providecommand{\href}[2]{#2}


\end{document}